\newtheorem{theorem}{Theorem}[section]
\newtheorem{corollary}{Corollary}
\newtheorem{lemma}[theorem]{Lemma}
\newtheorem{proposition}{Proposition}
\theoremstyle{definition}
\newtheorem{definition}[theorem]{Definition}
\newtheorem{remark}{Remark}
\title[Regularity in Reifenberg flat domains]
      {Boundary regularity for the Poisson equation in reifenberg-flat domains}
\author[Antoine Lemenant and Yannick Sire]{}
\subjclass{Primary:  ; Secondary: .}
 \keywords{Elliptic problem in nonsmooth domain, Reifenberg-flat domains, Regularity.}
 \email{sire@cmi.univ-mrs.fr}
 \email{lemenant@ljll.univ-paris-diderot.fr}
\newcommand{\R}{\mathbb R}
\begin{document}


\maketitle

\centerline{\scshape Antoine Lemenant}
\medskip
{\footnotesize
 \centerline{ LJLL, Universit\'e Paris-Diderot, CNRS }
 \centerline{Paris, France}
}

\medskip

\centerline{\scshape Yannick Sire }
\medskip
{\footnotesize
 \centerline{LATP, Universit\'e Aix-Marseille, CNRS}
   \centerline{Marseille, France}
} %

\bigskip

\begin{abstract} This paper is devoted to the investigation of the boundary regularity for the Poisson equation 
$$
\left\{
\begin{array}{cc}
-\Delta u = f & \text{ in } \Omega \\
u= 0 & \text{ on } \partial \Omega
\end{array}
\right.
$$
where $f$ belongs to some $L^p(\Omega)$ and $\Omega$ is a Reifenberg-flat domain of $\mathbb R^n.$ More precisely, we prove that given an exponent $\alpha\in (0,1)$, there exists an $\varepsilon>0$ such that the solution $u$ to the previous system is locally H\"older continuous provided that $\Omega$ is $(\varepsilon,r_0)$-Reifenberg-flat. The proof is based on  Alt-Caffarelli-Friedman's monotonicity formula and  Morrey-Campanato theorem. 
\end{abstract}

The goal of the present paper is to prove a boundary regularity result for the Poisson equation with homogeneous  Dirichlet boundary conditions in non smooth domains. We consider the case of Reifenberg-flat domains as given by the following definition

\begin{definition}\label{defreif} Let $\varepsilon, r_0$ be two real numbers satisfying $0 < \varepsilon<1/2$ and $r_0 >0$. An $(\varepsilon,r_0)$-Reifenberg-flat domain $\Omega \subset \R^N$ is an open,  bounded, and connected  set satisfying the two following  conditions  :

\noindent (i) for every $x \in \partial \Omega$ and for any $r\leq r_0$,  there exists a hyperplane $P(x,r)$ containing $x$ which satisfies
\begin{eqnarray*}
\frac{1}{r}d_H( \partial \Omega \cap B(x,r), P(x,r)\cap B(x,r))\leq \varepsilon. \label{reif}
\end{eqnarray*}

\noindent (ii)  for every $x \in \partial \Omega$,  one of the connected component of  
$$B(x,r_0)\cap \{x ; d(x,P(x,r_0))\geq 2\varepsilon r_0\}$$ 
is contained in   $\Omega$ and the other one is contained in $\Omega^c$.
\end{definition}

We consider the following problem in the $(\varepsilon, r_0)$-Reifenberg-flat domain
$\Omega \subset \R^N$ for some $f \in L^q(\Omega)$,
$$
(P1)\left\{
\begin{array}{cc}
-\Delta u = f & \text{ in } \Omega \\
u= 0 & \text{ on } \partial \Omega
\end{array}
\right.
$$

Reifenberg-flat domains are less smooth than Lipschitz domains and it is well known that we cannot expect more regularity than H\"older for boundary regularity of the Poisson equation in Lipschitz domains (see \cite{grisvard}, \cite[Remark 17]{lms}, or \cite{jklip}).

Historically, Reifenberg-flat domains came into consideration because of their relationship  with the regularity of the Poisson kernel and the harmonic measure, as shown in a series of famous and deep papers by Kenig and Toro (see for e.g. \cite{hm3,hm2,hm1,hm4}).  In particular, they are Non Tangentially Accessible (in short NTA) domains as described in \cite{JK}.  Notice that the Poisson kernel is defined as related to the solution  of the equation $ -\Delta u =0$ in $\Omega$ with $u=f$ on $\partial \Omega$. In this paper we consider the equation $(P1)$ which is  of different nature. However, our  regularity result is again based on the monotonicity formula of Alt, Caffarelli and Friedman \cite{acf}, which is known to be one of the key estimate in the study harmonic measure as well.

More recently, regularity of elliptic PDEs in Reifenberg-flat domains has been studied by Byun and Wang  in \cite{wang1,wang2,w1,w2,w3} (see also the references therein). One of their  main result regarding to equation of the type of $(P1)$ is the existence of a  global $W^{1,p}(\Omega)$ bound on  the solution. This fact will be used in  Corollary \ref{cor2} below.

The case of domains of $\mathbb R^n$ has been investigated by Caffarelli and Peral \cite{CP}. See also \cite{jklip} for the case of Lipschitz domains.  Some other type of elliptic problems in Reifenberg-flat domains can be found in \cite{LM1,LM0,L1,MT,lms}.

The present paper is the first step towards a general boundary regularity theory for elliptic PDEs in divergence form on Reifenberg-flat domains, that might be pursued in some future work. Our main result is the following.

\begin{theorem}\label{main} Let $p,q,p_0\geq 1$ be some exponents satisfying $\frac{1}{p}+\frac{1}{q}=\frac{1}{p_0}$ and $p_0>\frac{N}{2}$. Let $\alpha>0$ be any given exponent such that 
\begin{eqnarray}
\alpha< \frac{p_0-\frac{N}{2}}{p_0}. \label{defbeta4}
\end{eqnarray}
Then one can find an $\varepsilon = \varepsilon(N,\alpha)$  such that the following holds. Let $\Omega \subseteq \R^N$  be  an $(\varepsilon, r_0)$-Reifenberg-flat domain for some $r_0>0$, and let $u$ be a solution for the problem $(P1)$ in $\Omega$ with $u\in L^p(\Omega)$ and $f\in L^q(\Omega)$. Then  
$$u \in C^{0,\alpha}(B(x,r_0/12)\cap \overline{\Omega}) \quad \forall x\in \overline{\Omega}.$$
Moreover  $\|u\|_{C^{0,\alpha}(B(x,r_0/12)\cap \overline{\Omega})}\leq  C(N, r_0, \alpha,p_0,\|u\|_p,\|f\|_q)$.
\end{theorem}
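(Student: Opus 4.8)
The plan is to reduce the theorem to a Campanato-type gradient decay estimate and to establish that estimate by combining a comparison with harmonic functions, controlled near the boundary through the Alt--Caffarelli--Friedman formula, with an energy estimate for the inhomogeneity $f$. Concretely, I would first argue that it suffices to produce a constant $C$ such that
\[ \int_{\Omega\cap B(x_0,\rho)}|\nabla u|^2\,dy\le C\,\rho^{\,N-2+2\alpha}\qquad\text{for all }x_0\in\overline\Omega,\ \rho\le r_0/c, \]
with $C$ depending only on the quantities listed in the statement. Granting this, Morrey's Dirichlet growth theorem (the integral characterization of Hölder continuity, i.e. the Morrey--Campanato theorem advertised in the abstract) immediately yields $u\in C^{0,\alpha}$ on $B(x,r_0/12)\cap\overline\Omega$ together with the asserted norm bound, the radius $r_0/12$ being simply the range of scales for which the forthcoming iteration is valid. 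Before starting I would record that the natural energy solution lies in $W^{1,2}_{loc}$ and that its energy at the fixed scale $r_0$ is bounded in terms of $\|u\|_p$ and $\|f\|_q$, so the iteration has a finite starting value.

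For the decay estimate the interior case ($\dist(x_0,\partial\Omega)\ge\rho$) is classical: comparing $u$ with its harmonic replacement on $B(x_0,\rho)$ gives energy decay with exponent $N$ for the harmonic part, while the difference is controlled by $f$. The heart of the matter is a boundary point $x_0\in\partial\Omega$. Here I would invoke Definition \ref{defreif}: choose coordinates so that $P(x_0,\rho)=\{y_N=0\}$, and note that condition (ii) forces a definite portion of $B(x_0,\rho)$ to lie in $\Omega^c$, so the extension of $u$ by zero belongs to $W^{1,2}(B(x_0,\rho))$ and the complement of $\Omega$ occupies a fixed fraction of the ball. I would then introduce the harmonic function $h$ on $\Omega\cap B(x_0,\rho)$ agreeing with $u$ on $\partial B(x_0,\rho)\cap\Omega$ and vanishing on $\partial\Omega\cap B(x_0,\rho)$, and split $u=h+w$ with $w\in W^{1,2}_0(\Omega\cap B(x_0,\rho))$ solving $-\Delta w=f$.

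The decay of the harmonic part $h$ is where the Alt--Caffarelli--Friedman formula enters. Extending $h^+$ and $h^-$ by zero across $\partial\Omega$ produces two nonnegative subharmonic functions with disjoint supports, the zero extension being subharmonic precisely because, by condition (ii), one side of the nearly flat boundary lies outside $\Omega$, so the ACF functional $\Phi(x_0,s)=s^{-4}\prod_{\pm}\int_{B(x_0,s)}|\nabla h^{\pm}|^2\,|y-x_0|^{2-N}\,dy$ is monotone nondecreasing. Comparing the first Dirichlet eigenvalue of the region that $\Omega$ cuts out of the unit sphere at $x_0$ with that of an exact hemisphere, I would quantify how close to linear the growth of $h$ is, obtaining $\int_{B(x_0,s)}|\nabla h|^2\le (s/\rho)^{\,N-2+2\gamma}\int_{B(x_0,\rho)}|\nabla h|^2$ with $\gamma=\gamma(\varepsilon)\to1$ as $\varepsilon\to0$; in the degenerate one-phase case the same exponent follows from a direct eigenvalue argument. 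For the inhomogeneous part, the energy identity together with the Poincaré, Sobolev and Hölder inequalities applied with $f\in L^{p_0}$ (using $\tfrac1p+\tfrac1q=\tfrac1{p_0}$, $p_0>\tfrac N2$) gives $\int_{B(x_0,\rho)}|\nabla w|^2\le C\|f\|_{p_0}^2\,\rho^{\,N-2+2\beta}$ for an exponent $\beta$ tied to $p_0$. Feeding both bounds into the decay relation $E(s)\le C(s/\rho)^{N-2+2\gamma}E(\rho)+C\rho^{\,N-2+2\beta}$ and invoking the standard iteration lemma yields the required growth for every $\alpha$ below the threshold \eqref{defbeta4}, after fixing $\varepsilon=\varepsilon(N,\alpha)$ small enough that $\gamma>\alpha$.

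The main obstacle I anticipate is precisely the ACF step in a merely Reifenberg-flat (not Lipschitz) domain. Three points require genuine care: justifying rigorously that the zero-extensions of $h^{\pm}$ are subharmonic across the rough boundary, which relies essentially on condition (ii) of Definition \ref{defreif}; treating the one-phase degeneracy, where the two-factor product carries no information; and, most importantly, making the dependence $\gamma=\gamma(\varepsilon)$ explicit and uniform through a Friedland--Hayman--type eigenvalue comparison on caps that are only $\varepsilon$-close to a hemisphere. Securing $\gamma>\alpha$ for a threshold $\varepsilon$ depending only on $N$ and $\alpha$, uniformly over all boundary points and all scales, is the quantitative crux on which the whole argument rests.
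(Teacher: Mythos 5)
Your proposal is correct in outline but takes a genuinely different route from the paper, and it is worth comparing the two. You decompose $u=h+w$ on $\Omega\cap B(x_0,\rho)$ by harmonic replacement, prove decay for the harmonic part via an ACF-type monotonicity, estimate $\int|\nabla w|^2$ by the energy identity and H\"older--Sobolev, and conclude by the standard iteration lemma plus Morrey's Dirichlet growth theorem. The paper never decomposes $u$: it applies a one-phase, eigenvalue-weighted variant of the ACF formula \emph{directly to the solution}, absorbing the right-hand side into the monotone quantity. Concretely, the integration by parts of Lemma \ref{ipp} yields the differential inequality $\varphi(r)\le\gamma\, r\,\varphi'(r)+\psi(r)$ for $\varphi(r)=\int_{B(x_0,r)\cap\Omega}|\nabla u|^2|x-x_0|^{2-N}dx$ and $\psi(r)=\int_{B(x_0,r)\cap\Omega}|uf|\,|x-x_0|^{2-N}dx$, and the Gronwall-type Lemma \ref{Gronwall} then shows that $r\mapsto r^{-\beta}\varphi(r)+\beta\int_0^r\psi(s)s^{-1-\beta}ds$ is nondecreasing whenever the spherical slices satisfy the eigenvalue bound \eqref{hyp1}; Reifenberg flatness enters only to trap $\partial B(x_0,r)\cap\Omega$ in a cap $S_{t^*}$ slightly larger than a hemisphere, so that $\beta=\sqrt{(N-2)^2+4\lambda_1(S_{t^*})}-(N-2)$ reaches any value below $2$, and the conclusion comes via Poincar\'e and Campanato (Theorem \ref{camp}) rather than Morrey. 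This buys three simplifications over your scheme: no comparison function, no iteration lemma, and --- addressing what you single out as the quantitative crux --- no two-phase product and no Friedland--Hayman-type inequality at all. Indeed, the two-phase ACF functional is the wrong tool here: monotonicity of the product does not give decay of each factor, and it degenerates in the one-phase case, as you note; the ``direct eigenvalue argument'' you reserve for that degenerate case is in fact the whole engine, for both signs at once, and the uniform dependence $\gamma=\gamma(\varepsilon)\to 1$ follows simply from continuity and monotonicity of $t\mapsto\lambda_1(S_t)$ near the hemisphere, uniformly over boundary points and scales by the very definition of $(\varepsilon,r_0)$-flatness. Two smaller repairs your write-up would need: first, the clean decay $\int_{B_s}|\nabla h|^2\le(s/\rho)^{N-2+2\gamma}\int_{B_\rho}|\nabla h|^2$ with constant $1$ does not follow directly, since the monotone quantity carries the weight $|x-x_0|^{2-N}$ and the passage from weighted to unweighted energy at the top scale loses a constant --- the paper handles this by choosing a good radius $r_1\in(\rho/2,\rho)$ where the spherical energy is below average (estimate \eqref{boundM}), and with a constant $C$ your iteration lemma still applies; second, subharmonicity of the zero extensions of $h^{\pm}$ follows from $h\ge 0$ being harmonic with zero trace on $\partial\Omega\cap B$, not from condition (ii) of Definition \ref{defreif}, whose role is confined to the cap inclusion. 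Finally, note where the hypotheses land in each scheme: your $w$-estimate needs only $f\in L^{p_0}$ locally (with $u\in L^p$ entering through the initial energy $\|\nabla u\|_2^2=\int uf$), while the paper needs $uf\in L^{p_0}$ for $\psi$ (Lemma \ref{computation}); both are supplied by H\"older from $\frac1p+\frac1q=\frac1{p_0}$, $p_0>\frac N2$, and both yield the same threshold $\alpha<\frac{p_0-N/2}{p_0}$.
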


Observe that in the statement of Theorem \ref{main}, some a priori  $L^p$ integrability on $u$ is needed to get some H\"older regularity. In what follows we shall see at least two situations where we know that $u\in L^p$ for some $p>2$, and consequently state two Corollaries where the integrability hypothesis is given on $f$ only, without any a priori requierement on $u$.

First, notice that when  $f\in L^q(\Omega)$ for $2\leq q\leq +\infty$, then the application $v \mapsto \int_{\Omega}vf dx$ is a bounded Linear form on $W^{1,2}_0(\Omega)$, endowed with the scalar product $\int_\Omega \nabla u \cdot \nabla v \;dx$. Therefore, using Riesz representation Theorem we deduce the existence of a unique weak solution $u \in W^{1,2}_0(\Omega)$ for the problem (P1). Moreover, the Sobolev inequality says that 
$u \in L^{2^*}(\Omega)$, with $2^*=\frac{2N}{N-2}$. Some simple computations shows that in this situation, $u$ and $f$ verify the statement of Theorem \ref{main} provided that $2\leq N\leq 5$, which leads to the following corollary.

\begin{corollary} \label{cor1} Assume that $2\leq N\leq 5 $ and  let $q \in I_N$ be given where
$$I_N=\left\{
\begin{array}{cc}
[2,+\infty) &\text{ if } N=2 \\
(\frac{2N}{6-N}, +\infty) & \text{ for } 3\leq N \leq 5.
\end{array}
\right.
$$
Then for any $\alpha>0$ verifying
$$\alpha <  1-\frac{N}{2}\left(\frac{N-2}{2N}+\frac{1}{q}\right) ,$$
we can find an $\varepsilon = \varepsilon(N,\alpha)$ such that the following holds. Let $\Omega \subseteq \R^N$  be  an $(\varepsilon, r_0)$-Reifenberg-flat domain for some $r_0>0$, let $f\in L^q(\Omega)$ and let $u \in W^{1,2}_0(\Omega)$ be the unique solution for the problem $(P1)$ in $\Omega$. Then  
$$u \in C^{0,\alpha}(B(x,r_0/12)\cap \overline{\Omega}) \quad \forall x\in \overline{\Omega}.$$
Moreover  $\|u\|_{C^{0,\alpha}(B(x,r_0/12)\cap \overline{\Omega})}\leq  C(N, r_0, \alpha,q,\|\nabla u\|_2,\|f\|_q)$.
\end{corollary}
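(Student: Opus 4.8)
The plan is to derive Corollary \ref{cor1} directly from Theorem \ref{main} by verifying its hypotheses.
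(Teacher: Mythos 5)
Your one-line plan does coincide with the paper's strategy, but as written it is not a proof: for this corollary, verifying the hypotheses of Theorem \ref{main} \emph{is} the entire argument, and the one genuine idea required is absent from your proposal. Theorem \ref{main} demands an a priori integrability hypothesis $u \in L^p(\Omega)$ with $\frac{1}{p}+\frac{1}{q}=\frac{1}{p_0}$ and $p_0>\frac{N}{2}$, whereas Corollary \ref{cor1} assumes only $u \in W^{1,2}_0(\Omega)$. So you must produce the exponent $p$ yourself: the missing step is the Sobolev embedding, which gives $u \in L^{2^*}(\Omega)$ with $p=2^*=\frac{2N}{N-2}$ (every finite exponent when $N=2$), together with the bound $\|u\|_{2^*}\leq C(N,|\Omega|)\|\nabla u\|_2$ --- which is also what makes the final constant depend on $\|\nabla u\|_2$ as stated. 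Without naming this $p$, ``verifying the hypotheses'' has no content.

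Once $p=2^*$ is fixed, the remaining work is the exponent arithmetic that accounts for every otherwise unexplained restriction in the statement, and your proposal gives none of it. One must check: (i) $\frac{1}{p_0}=\frac{N-2}{2N}+\frac{1}{q}$, so $p_0>\frac{N}{2}$ is equivalent to $\frac{N-2}{2N}+\frac{1}{q}<\frac{2}{N}$, i.e.\ $q>\frac{2N}{6-N}$; (ii) since $\frac{N-2}{2N}<\frac{2}{N}$ forces $N-2<4$, this is only possible for $N\leq 5$, which is where the dimensional restriction $2\leq N\leq 5$ comes from; (iii) $q\geq 2$ is needed for existence and uniqueness of the weak solution via the Riesz representation theorem (this is automatic for $3\leq N\leq 5$ because then $\frac{2N}{6-N}\geq 2$, but must be imposed separately when $N=2$, explaining the definition of $I_N$); and (iv) the admissible range $\alpha<\frac{p_0-\frac{N}{2}}{p_0}=1-\frac{N}{2}\bigl(\frac{N-2}{2N}+\frac{1}{q}\bigr)$ is exactly the one claimed in the corollary. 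These computations are routine, but they are the proof; a proposal that stops at ``apply Theorem \ref{main}'' has skipped both the key idea (the embedding supplying the $L^p$ hypothesis on $u$) and the bookkeeping that justifies the hypotheses on $N$, $q$, and $\alpha$.
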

\begin{proof}  Since $u \in W^{1,2}_0(\Omega)$, the Sobolev embedding says that 
$u \in L^{p}(\Omega)$, with $p=2^*=\frac{2N}{N-2}$. And by assumption $f\in L^q$ for some $q \in I_N$ (notice that $q\geq 2$, which guarantees existence and uniqueness of the weak solution). We now try to apply Theorem \ref{main} with those $p$ and $q$. Let $p_0$ be defined by
$$\frac{1}{p}+\frac{1}{q}=\frac{1}{p_0} .$$
Then a simple computation yields that $p_0> \frac{N}{2}$, provided that
$$q>\frac{2N}{6-N}.$$
This fixes the range of dimension $N\leq 5$ and notice that in this case $\frac{2N}{6-N}\geq 2$ except for $N=2$, which justifies the definition of $I_N$. We then conclude by applying Theorem \ref{main}. 
\end{proof}

In the proof of Corollary \ref{cor1} we brutally  used the Sobolev embedding on $W^{1,2}_0(\Omega)$ to obtain an $L^p$ integrability on $u$. But under some natural hypothesis we can get more using a Theorem by Byun and Wang \cite{w1}. Precisely, if $f={\rm div } F$ for some $F \in L^2$, then $f$ lies in the dual space of $H^1_0(\Omega)$ which guarantees the existence and uniqueness of a weak solution $u$ for (P1), again by the Reisz representation Theorem. The theorem of Byun and Wang \cite[Theorem 2.10]{w1} implies moreover that if $F\in L^r(\Omega)$ then  $\nabla u \in L^r(\Omega)$ as well. But then the Sobolev inequality says that   $u \in L^{r^*}$ which allows us to apply Theorem \ref{main} for a larger range of dimensions and exponents.  Of course this analysis is interesting only for $r\leq N$ because if $r>N$ we directly get some H\"older estimates by the classical Sobolev embedding. This leads to our second corollary.

\begin{corollary} \label{cor2} Let  $\frac{N}{3}< r\leq N$ and  $q>\frac{rN}{3r-N}$ be given, so that moreover $r\geq 2$.  Then  for any $\alpha>0$ satisfying
$$\alpha <   1-\frac{N}{2}\left(\frac{1}{r^*}+\frac{1}{q}\right), \quad \text{ with } r^*:=\frac{rN}{N-r},$$
we can find an $\varepsilon = \varepsilon(N,\alpha, |\Omega|, r)$ such that the following holds. Let $\Omega \subseteq \R^N$  be  an $(\varepsilon, r_0)$-Reifenberg-flat domain for some $r_0>0$, let $f\in L^q(\Omega)$ and assume that $f=-{\rm div } F$  for some $F\in L^r(\Omega)$. Let $u \in W^{1,2}_0(\Omega)$ be the unique weak solution for the problem $(P1)$ in $\Omega$. Then  
$$u \in C^{0,\alpha}(B(x,r_0/12)\cap \overline{\Omega}) \quad \forall x\in \overline{\Omega}.$$
Moreover  $\|u\|_{C^{0,\alpha}(B(x,r_0/12)\cap \overline{\Omega})}\leq  C(N, r_0, \alpha,r,q,|\Omega|, \|f\|_q , \|F\|_r)$.
\end{corollary}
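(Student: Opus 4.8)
The plan is to follow the strategy already used for Corollary \ref{cor1}, replacing the crude Sobolev embedding on $W^{1,2}_0(\Omega)$ by the sharper gradient estimate of Byun and Wang in order to reach a higher integrability exponent for $u$, and then to feed the resulting pair $(p,q)=(r^*,q)$ into Theorem \ref{main}. First I would record that, since $\Omega$ is bounded and $r\geq 2$, the inclusion $L^r(\Omega)\subseteq L^2(\Omega)$ gives $F\in L^2(\Omega)$; hence the linear form $v\mapsto \int_\Omega F\cdot\nabla v\,dx$ is bounded on $W^{1,2}_0(\Omega)$, and the Riesz representation theorem yields the unique weak solution $u\in W^{1,2}_0(\Omega)$ of $(P1)$, exactly as in the discussion preceding the corollary.

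The key step is to upgrade the integrability of $u$. Invoking \cite[Theorem 2.10]{w1}, which applies once $\Omega$ is $(\varepsilon,r_0)$-Reifenberg-flat with $\varepsilon$ small enough depending on $N$, $r$ and $|\Omega|$, from $F\in L^r(\Omega)$ I obtain $\nabla u\in L^r(\Omega)$ together with a bound $\|\nabla u\|_r\leq C(N,r,|\Omega|)\|F\|_r$. Since $r\leq N$, the Sobolev inequality then gives $u\in L^{r^*}(\Omega)$ with $r^*=\frac{rN}{N-r}$ and $\|u\|_{r^*}\leq C\|\nabla u\|_r$, so that in the end $\|u\|_{r^*}\leq C(N,r,|\Omega|)\|F\|_r$.

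It remains to check that $p:=r^*$ and the given $q$ satisfy the hypotheses of Theorem \ref{main}. Defining $p_0$ by $\frac{1}{p_0}=\frac{1}{r^*}+\frac{1}{q}$ and using $\frac{1}{r^*}=\frac{1}{r}-\frac{1}{N}$, the requirement $p_0>\frac{N}{2}$ is equivalent to $\frac{1}{r}+\frac{1}{q}<\frac{3}{N}$, i.e. to the assumption $q>\frac{rN}{3r-N}$; and the admissible range $\alpha<\frac{p_0-N/2}{p_0}=1-\frac{N}{2}\left(\frac{1}{r^*}+\frac{1}{q}\right)$ is precisely the one in the statement. Applying Theorem \ref{main} with these exponents then produces the local Hölder bound, the constant being controlled through $\|u\|_{r^*}$ and $\|f\|_q$, hence through $\|F\|_r$, $\|f\|_q$ and $|\Omega|$, which matches the claimed dependence.

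The main point requiring care is the bookkeeping of the smallness parameter. Theorem \ref{main} requires $\varepsilon\leq\varepsilon_1(N,\alpha)$, whereas the Byun--Wang gradient estimate requires $\varepsilon\leq\varepsilon_2(N,r,|\Omega|)$; the corollary therefore holds with $\varepsilon=\min\{\varepsilon_1,\varepsilon_2\}$, which explains the additional dependence $\varepsilon=\varepsilon(N,\alpha,|\Omega|,r)$ absent from Corollary \ref{cor1}. One should also confirm that the standing inequalities $\frac{N}{3}<r\leq N$ and $q>\frac{rN}{3r-N}$ indeed force $r\geq 2$, so that the existence argument above applies and the upper bound on $\alpha$ stays positive.
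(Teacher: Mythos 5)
Your proposal is correct and follows essentially the same route as the paper's proof: the Byun--Wang estimate \cite[Theorem 2.10]{w1} upgrades $u$ to $L^{r^*}(\Omega)$ via the Sobolev inequality, the exponent arithmetic $\frac{1}{p_0}=\frac{1}{r^*}+\frac{1}{q}$ with $p_0>\frac{N}{2}$ reproduces the stated conditions, and Theorem \ref{main} concludes (your explicit $\varepsilon=\min\{\varepsilon_1,\varepsilon_2\}$ bookkeeping is a point the paper leaves implicit, and you silently fix the paper's typo ``$p_0<\frac{N}{2}$''). One small correction: $r\geq 2$ is an additional hypothesis of the statement ensuring $F\in L^2(\Omega)$ (it is only forced by $r>\frac{N}{3}$ when $N\geq 6$), not a consequence of the other assumptions to be verified.
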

\begin{proof} First we apply \cite[Theorem 2.10]{w1} which provides the existence of a threshold  $\varepsilon_0=\varepsilon(N,|\Omega|,r)$ such that  for any solution $u \in W^{1,2}_0(\Omega)$ of (P1) with $f=-{\rm div } F$ and $F \in L^r(\Omega)$,  we have that $\nabla u \in L^r(\Omega)$, provided that $\Omega$ is $(\varepsilon_0,r_0)$-Reifenberg-flat. But then the Sobolev inequality implies that $u\in L^{r^*}$ with 
$$r^*=\frac{rN}{N-r} \quad \text{ if } r<N,$$
and $r^*=+\infty$ otherwise.

In order to apply Theorem \ref{main} we define $p_0$ such that 
$$\frac{1}{r^*} +\frac{1}{q}=\frac{1}{p_0},$$
and we only need to check that $p_0<\frac{N}{2}$. This implies the following condition on $r$ and $q$ :
$$r>\frac{N}{3} \text{ and } q>\frac{rN}{3r-N},$$
as required in the statement of the Corollary. We finally conclude by applying Theorem \ref{main}.
\end{proof}

Our  approach to prove Theorem \ref{main} follows the one that was already used in \cite{lms} to control the energy of eigenfunctions near the boundary of Reifenberg-flat domains, and that we apply here to other PDE than the eigenvalue problem. The main ingredient in the proof is a variant of   Alt-Caffarelli-Friedman's monotonicity formula \cite[Lemma 5.1]{acf}, to control the behavior of the Dirichlet energy in balls centered at the boundary, as well as in the interior of $\Omega$. Then we conclude by Morey-Campanato Theorem.


\section{The monotonicity Lemma}

We begin with a technical Lemma which basically contains the justification of an integration by parts. The proof is exactly the same as the first step of \cite[Lemma 15]{lms} but we decided to provide here the full details for the convenience of the reader.

\begin{lemma} \label{ipp}Let $u$ be a solution for the problem $(P1)$. Then  for every $x_0 \in \partial \Omega$ and  a.e. $r>0$ we have
\begin{eqnarray}
\int_{B(x_0,r)\cap \Omega}&|\nabla u|^2& |x|^{2-N}dx \leq  r^{2-N}\int_{\partial B(x_0,r)\cap \Omega} u \frac{\partial u}{\partial \nu}dS \notag \\
&+& \frac{(N-2)}{2}r^{1-N}\int_{\partial B(x_0,r)\cap \Omega} u^2dS   + \; \int_{B(x_0,r)\cap \Omega}u f |x|^{2-N} dx. \label{mono00}
\end{eqnarray}
\end{lemma}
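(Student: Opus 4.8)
The plan is to read (\ref{mono00}) as a weighted Rellich identity obtained by testing the weak formulation of $(P1)$ against $u$ times the fundamental-solution weight. After translating so that $x_0=0$, set $G(x):=|x|^{2-N}$, which is harmonic on $\R^N\setminus\{0\}$ and satisfies $\nabla G(x)=(2-N)|x|^{-N}x$. Since $G$ is singular exactly at $x_0=0\in\partial\Omega$, I would first regularize it by the truncation $G_\rho(x):=\min\bigl(|x|^{2-N},\rho^{2-N}\bigr)$, which is bounded and Lipschitz; because $u\in W^{1,2}_0(\Omega)$ vanishes on $\partial\Omega$, the product $uG_\rho$ is then an admissible test function and every integration by parts below is legitimate.

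Next I would apply Green's identity on $D:=B(0,r)\cap\Omega$ for a.e.\ $r$, the good radii being selected by the coarea formula so that the traces of $u$ and $\nabla u$ on $\partial B_r$ lie in $L^2(\partial B_r\cap\Omega)$. Using $-\Delta u=f$ and $u=0$ on $B_r\cap\partial\Omega$, the boundary term survives only on $\partial B_r\cap\Omega$, where $G_\rho=r^{2-N}$, giving $\int_D \nabla u\cdot\nabla(uG_\rho)\,dx = r^{2-N}\int_{\partial B_r\cap\Omega}u\,\partial_\nu u\,dS+\int_D fuG_\rho\,dx$. I would then split $\nabla u\cdot\nabla(uG_\rho)=G_\rho|\nabla u|^2+\tfrac12\nabla(u^2)\cdot\nabla G_\rho$ and integrate the second piece by parts once more. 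Since $\nabla G_\rho$ is supported in $\{|x|>\rho\}$, where $\Delta G=0$, this produces no bulk term but two sphere contributions: on the outer sphere $\partial_\nu G=(2-N)r^{1-N}$, which supplies the $\tfrac{N-2}{2}r^{1-N}\int u^2$ term, and on the inner sphere (outward normal $-x/\rho$) $\partial_\nu G=(N-2)\rho^{1-N}$, which supplies $K_\rho:=\tfrac{N-2}{2}\rho^{1-N}\int_{\partial B_\rho\cap\Omega}u^2\,dS$.

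The decisive observation is that $K_\rho\ge 0$ because $N\ge 2$; this is exactly why the statement is an inequality and not an identity, and discarding $K_\rho$ removes any need for a pointwise decay estimate of $u$ at the origin. After dropping $K_\rho$ I would let $\rho\to0$: the left-hand side converges to $\int_D|\nabla u|^2|x|^{2-N}\,dx$ by monotone convergence (which simultaneously certifies finiteness of this weighted Dirichlet integral once the right-hand side is finite), while $\int_D fuG_\rho\,dx\to\int_D fu|x|^{2-N}\,dx$ by dominated convergence. I expect the only genuinely delicate points to be the rigorous justification of Green's formula on the rough domain $D=B(0,r)\cap\Omega$---handled through the a.e.\ choice of admissible radii $r$, with the $\partial\Omega$-portion costing nothing since $u\equiv0$ there---and the integrability required for the dominated convergence, which follows from $|x|^{2-N}\in L^1_{\mathrm{loc}}(\R^N)$ together with H\"older's inequality under the integrability hypotheses on $u$ and $f$.
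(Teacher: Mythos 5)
Your proof is correct, and while it rests on the same Alt--Caffarelli--Friedman mechanism as the paper (integrate by parts against a regularized fundamental solution and discard a term whose sign comes from superharmonicity), the regularization is genuinely different. The paper smooths the weight, replacing $|x|^{2-N}$ by $|x|_\varepsilon^{2-N}=(|x|^2+\varepsilon)^{\frac{2-N}{2}}$, whose pointwise superharmonicity $\Delta(|x|_\varepsilon^{2-N})\le 0$ is what allows the bulk term to be dropped; it simultaneously approximates $u$ by $u_n\in C^\infty_c(\Omega)$, works with the identity $\Delta(u_n^2)=2|\nabla u_n|^2+2u_n\Delta u_n$, and handles the a.e.-$r$ boundary terms by averaging over $[r,r+\delta]$ before taking the three limits $n\to\infty$, $\delta\to 0^+$, $\varepsilon\to 0^+$. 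You instead truncate the weight, $G_\rho=\min(|x|^{2-N},\rho^{2-N})$, test the weak formulation with $uG_\rho$ (admissible since $G_\rho$ is bounded Lipschitz and $u\in W^{1,2}_0(\Omega)$), and concentrate the superharmonicity into the single explicit inner-sphere term $K_\rho=\frac{N-2}{2}\rho^{1-N}\int_{\partial B(x_0,\rho)\cap\Omega}u^2\,dS\ge 0$, which is precisely the paper's nonpositive bulk term in measure form; discarding it and letting $\rho\to 0$ (monotone convergence for the weighted Dirichlet integral, dominated convergence for $\int fuG_\rho\,dx$) gives the inequality with one limit instead of three, and makes visible exactly what the inequality loses. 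What the paper's route buys in exchange is that every integration by parts involves smooth data, so Green's formula is never invoked on the rough set $B(x_0,r)\cap\Omega$; in your write-up that justification is the one place where you would still, in effect, reproduce the paper's machinery --- approximating $u$ by $C^\infty_c(\Omega)$ functions, so that the $\partial\Omega$ portion genuinely costs nothing (on a Reifenberg-flat boundary ``$u=0$ on $\partial\Omega$'' holds only in the $W^{1,2}_0$ sense), and selecting good radii $r$ (and $\rho$) by an averaging or Lebesgue-point argument so that the flux term $\int_{\partial B(x_0,r)\cap\Omega}u\,\frac{\partial u}{\partial\nu}\,dS$ is meaningful. You flag both points explicitly, so I see no gap; your sign bookkeeping is also consistent (the outer-sphere value $\partial_\nu G=(2-N)r^{1-N}$ becomes $+\frac{N-2}{2}r^{1-N}$ once moved to the right-hand side, matching \eqref{mono00}), and the finiteness of $\int|uf|\,|x|^{2-N}dx$ that your dominated-convergence step needs is the same implicit integrability the paper uses when it sends $\varepsilon\to 0^+$, later guaranteed by Lemma \ref{computation}.
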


\begin{proof}  Although~\eqref{mono00} can be formally obtained through an integration by parts, the rigorous proof is a bit technical. In the sequel we use the notation $\Omega_r^+:=B(x_0,r)\cap \Omega$, and $S_r^+:=\partial B(x_0,r) \cap \Omega$. We find it convenient to define, for a given $\varepsilon>0$, the regularized norm
$$|x|_{\varepsilon}:= \sqrt{x_1^2+x_2^2+\dots +x_N^2 +\varepsilon},$$
so that $|x|_{\varepsilon}$ is a $C^\infty$ function.  A direct computation shows that
$$\Delta(|x|_{\varepsilon}^{2-N})=(2-N)N\frac{\varepsilon}{|x|_{\varepsilon}^{N+2}}\leq 0,$$ 
in other words $|x|_{\varepsilon}^{2-N}$ is superharmonic, and hopefully enough this goes in the right direction regarding to the next inequalities.

We use one more regularization thus we let $u_n \in C^\infty_c(\Omega)$ be a sequence of functions converging in $W^{1,2}(\R^N)$ to $u$. We now proceed as in the proof of Alt, Caffarelli and Friedman monotonicity formula \cite{acf}: by using the equality
\begin{eqnarray}
\Delta(u_n^2)=2|\nabla u_n|^2 + 2 u_n \Delta u_n  \label{mono1}
\end{eqnarray}
 we deduce that
\begin{eqnarray}
2\int_{\Omega^+_r}|\nabla u_n|^2 |x|_\varepsilon^{2-N} = \int_{\Omega^+_r} \Delta(u_n^2) |x|_\varepsilon^{2-N} - 2\int_{\Omega^+_r}(u_n \Delta u_n) |x|_\varepsilon^{2-N} . \label{mono2}
\end{eqnarray}
Since $\Delta( |x|_\varepsilon^{2-N})\leq 0$,  the Gauss-Green Formula yields
\begin{eqnarray}
\int_{\Omega^+_r} \Delta(u_n^2) |x|_\varepsilon^{2-N}dx  = \int_{\Omega^+_r} u_n^2 \Delta( |x|_\varepsilon^{2-N})dx + I_{n,\varepsilon} (r) \leq I_{n,\varepsilon} (r) \;\label{mono3},
\end{eqnarray}
where 
$$I_{n,\varepsilon}(r)=(r^2+\varepsilon)^{\frac{2-N}{2}}\int_{\partial \Omega^+_r} 2u_n \frac{\partial u_n}{\partial \nu}dS +(N-2)\frac{r}{(r^2+\varepsilon)^{\frac{N}{2}}}\int_{\partial \Omega^+_r} u_n^2dS.$$ 
In other words, \eqref{mono2} reads
\begin{eqnarray}
2\int_{\Omega^+_r}|\nabla u_n|^2 |x|_\varepsilon^{2-N}dx \leq I_{n,\varepsilon} (r)- 2\int_{\Omega^+_r}(u_n \Delta u_n) |x|_\varepsilon^{2-N} dx. \label{mono3bis}
\end{eqnarray}
We now want to pass to the limit, first as  $n\to +\infty$, and then as $\varepsilon\to 0^+$. To tackle some technical problems, we first integrate over $r \in [r,r+\delta]$ and divide by $\delta$, thus obtaining
\begin{eqnarray}
\frac{2}{\delta}\int_{r}^{r+\delta}\left(\int_{\Omega^+_{\rho}}|\nabla u_n|^2 |x|_\varepsilon^{2-N}dx\right) d \rho\leq  A_n-R_n, \label{mono3ter}
\end{eqnarray}
where 
$$A_n=\frac{1}{\delta}\int_{r}^{r+\delta}I_{n,\varepsilon}(\rho) d\rho$$
and 
$$R_n= 2\frac{1}{\delta}\int_{r}^{r+\delta}\left(\int_{\Omega^+_\rho}(u_n \Delta u_n) |x|_{\varepsilon}^{2-N} dx\right) d\rho.$$
First, we investigate the limit of $A_n$ as $n \to + \infty$: by applying the coarea formula, we rewrite $A_n$ as
$$
    A_n = \frac{1}{\delta} \Bigg( 2 \int_{\Omega^+_{r + \delta} \setminus \Omega^+_r}   (|x|^2+\varepsilon)^{\frac{2-N}{2}} u_n \, \nabla u_n \cdot x \, dx  +
    (N-2) \int_{\Omega^+_{r + \delta} \setminus \Omega^+_r}  \frac{|x|}{(|x|^2+\varepsilon)^{\frac{N}{2}}  } u_n^2 dx \Bigg). 
$$
Since $u_n$ converges to $u$ in $W^{1,2}(\R^N)$ when $n\to +\infty$, then by using again the coarea formula we get that 
$$A_n \longrightarrow \frac{1}{\delta}\int_{r}^{r+\delta}I_{\varepsilon}(\rho) d\rho \quad  \qquad n \to + \infty, $$
where
$$I_{\varepsilon} (\rho)=(\rho^2+\varepsilon)^{\frac{2-N}{2}}\int_{\partial \Omega^+_{\rho}} 2u \frac{\partial u}{\partial \nu}dS +(N-2)\frac{\rho}{(\rho^2+\varepsilon)^{\frac{N}{2}}}\int_{\partial \Omega^+_{\rho}} u^2dS.$$
Next, we investigate the limit of $R_n$ as $n \to + \infty$. By using Fubini's Theorem, we can rewrite $R_n$ as
$$R_n=\int_{\Omega}(u_n\Delta u_n)G(x)dx,$$ 
where $$G(x)=|x|_\varepsilon^{2-N}\frac{1}{\delta}\int_{r}^{r+\delta}{\bf 1}_{\Omega^+_\rho}(x)d\rho.$$
Since 
$$
\frac{1}{\delta}\int_{r}^{r+\delta}{\bf 1}_{\Omega^+_\rho}(x)d\rho = 
\left\{
\begin{array}{ll}
1 &\text{ if } x \in \Omega^+_{r}\\
\displaystyle{\frac{r+\delta-|x|}{\delta}} & \text{ if } x \in \Omega^+_{r+\delta}\setminus \Omega^+_{r}\\
0 &\text{ if } x \notin \Omega^+_{r+\delta}\\
\end{array}
\right.,
$$
then $G$ is Lipschitz continuous and hence by recalling $u_n \in C^{\infty}_c (\Omega)$ we get  
\begin{equation*}
\begin{split}
&      \left| \int_{\Omega} \Big( u_n \Delta u_n - u \Delta u \Big) G dx     \right| \leq 
         \left| \int_{\Omega} \Big( \Delta u_n -  \Delta u \Big) u_n G dx     \right| +
          \left| \int_{\Omega} \Big( u_n - u \Big) \Delta u  G dx     \right| \\ 
 &    =  \left| \int_{\Omega} \Big( \nabla u_n -  \nabla u \Big)  \Big( u_n \nabla G+ \nabla u_n G\Big) dx     \right|     
      +
      \left| \int_{\Omega}\nabla u \Big( ( \nabla u_n - \nabla u )    G+ ( u_n - u ) \nabla G \Big)dx     \right|  \\
 & \leq      \|  \nabla u_n -  \nabla u \|_{L^2 (\Omega)}  
   \Big( \| u_n \|_{L^2 (\Omega)} \| \nabla G\|_{L^{\infty} (\Omega)} + 
  \| \nabla u_n \|_{L^{2} (\Omega)} \|G\|_{L^{\infty} (\Omega) } \Big)  \phantom{\int} \\ 
  & \quad + 
  \| \nabla u  \|_{L^2(\Omega)} \Big(  \|  \nabla u_n -  \nabla u \|_{L^2 (\Omega)}    \|G \|_{L^{\infty} (\Omega)} +
  \|  u_n -   u \|_{L^2 (\Omega)}    \|\nabla G \|_{L^{\infty} (\Omega)}\Big)  \phantom{\int} \\
\end{split}
\end{equation*}
and hence the expression at the first line converges to $0$ as $n \to + \infty$.

By combining the previous observations and by recalling that $u$ satisfies the equation in the problem $(P1)$ we infer that by passing to the limit $n \to \infty$ in~\eqref{mono3ter} we get 
$$
   \frac{2}{\delta}\int_{r}^{r+\delta}\left(\int_{\Omega^+_{\rho}}|\nabla u|^2 |x|_\varepsilon^{2-N}dx\right) dr
   \leq \frac{1}{\delta}\int_{r}^{r+\delta}I_{\varepsilon}(\rho) d\rho + \frac{2}{\delta}\int_{r}^{r+\delta}\left(\int_{\Omega^+_\rho} u f |x|_{\varepsilon}^{2-N} dx\right) d\rho.
$$
Finally, dividing by 2, by passing to the limit $\delta \to 0^+$, and then $\varepsilon \to 0^+$ we obtain~\eqref{mono00}. 
\end{proof}

Next, we will need the following Lemma of Gronwall type.

\begin{lemma} \label{Gronwall}Let $\gamma>0$, $r_0>0$, $\psi : (0,r_0) \to \R$ be a continuous function and $\varphi : (0,r_0)  \to \R$ be an absolutely continuous function that satisfies the following inequality  for a.e. $r \in (0,r_0)$,
\begin{eqnarray}
\varphi(r)\leq \gamma r \varphi'(r) + \psi(r). \label{gronwall1}
\end{eqnarray}
Then 
$$r\mapsto \frac{\varphi(r)}{r^{\frac{1}{\gamma}}}+\frac{1}{\gamma}\int_0^r \frac{\psi(s)}{s^{1+\frac{1}{\gamma}}}ds $$
is a nondecreasing function on $(0,r_0)$.
\end{lemma}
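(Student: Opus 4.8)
The plan is to treat \eqref{gronwall1} as a linear first-order differential inequality and to integrate it by means of the standard integrating factor $r^{-1/\gamma}$. Since $\gamma>0$ and $r>0$ throughout $(0,r_0)$, I would first rearrange the hypothesis into
$$\gamma r\,\varphi'(r)-\varphi(r)\geq -\psi(r)\qquad \text{for a.e. } r,$$
and then divide by the strictly positive quantity $\gamma r^{1+\frac{1}{\gamma}}$, which preserves the direction of the inequality, to obtain
$$\frac{\varphi'(r)}{r^{\frac{1}{\gamma}}}-\frac{1}{\gamma}\frac{\varphi(r)}{r^{1+\frac{1}{\gamma}}}\geq -\frac{1}{\gamma}\frac{\psi(r)}{r^{1+\frac{1}{\gamma}}}.$$
The whole point of the factor $r^{-1/\gamma}$ is that the left-hand side is exactly $\frac{d}{dr}\bigl(\varphi(r)\,r^{-1/\gamma}\bigr)$, since $\frac{d}{dr}\bigl(r^{-1/\gamma}\bigr)=-\frac{1}{\gamma}r^{-(1+\frac{1}{\gamma})}$.

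Next I would introduce the candidate function
$$F(r):=\frac{\varphi(r)}{r^{\frac{1}{\gamma}}}+\frac{1}{\gamma}\int_0^r\frac{\psi(s)}{s^{1+\frac{1}{\gamma}}}\,ds,$$
and note that differentiating the integral term (legitimate by the fundamental theorem of calculus, $\psi$ being continuous) produces precisely the quantity that cancels the right-hand side of the displayed inequality, so that $F'(r)\geq 0$ for a.e. $r\in(0,r_0)$. It then remains only to pass from ``nonnegative almost-everywhere derivative'' to ``nondecreasing,'' which is where the one genuine point of care lies.

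To justify that last step I would fix an arbitrary compact subinterval $[a,b]\subset(0,r_0)$ and verify that $F$ is absolutely continuous on it. The factor $r^{-1/\gamma}$ is smooth and bounded away from $0$ and $\infty$ on $[a,b]$, so $\varphi(r)\,r^{-1/\gamma}$ is a product of an absolutely continuous function and a $C^1$ function, hence absolutely continuous; and $r\mapsto\int_a^r\psi(s)\,s^{-(1+\frac{1}{\gamma})}\,ds$ is absolutely continuous because its integrand is continuous, hence bounded, on $[a,b]$. For an absolutely continuous function the fundamental theorem of calculus gives $F(b)-F(a)=\int_a^b F'(r)\,dr\geq 0$, and since $[a,b]$ is arbitrary this shows that $F$ is nondecreasing on $(0,r_0)$.

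The main obstacle I anticipate is not the integrating-factor manipulation, which is routine, but the behavior at the endpoint $0$: a priori the integral $\int_0^r\psi(s)\,s^{-(1+\frac{1}{\gamma})}\,ds$ need not converge, so the very expression defining $F$ with lower limit $0$ requires attention. This is handled by the (harmless) observation that replacing the base point $0$ by any fixed interior point alters $F$ only by an additive constant and therefore leaves monotonicity unaffected; equivalently, working on compact subintervals $[a,b]$ as above sidesteps the issue entirely, since the potentially divergent contribution near $0$ is common to $F(a)$ and $F(b)$ and cancels in the difference $F(b)-F(a)$.
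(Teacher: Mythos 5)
Your proposal is correct and follows essentially the same route as the paper: both introduce the function $F(r)=\varphi(r)r^{-1/\gamma}+\frac{1}{\gamma}\int_0^r\psi(s)s^{-1-1/\gamma}\,ds$, show via the integrating factor $r^{-1/\gamma}$ that $F'\geq 0$ a.e., and conclude monotonicity from absolute continuity. Your extra care about the possibly divergent integral at $0$ and the explicit verification of absolute continuity on compact subintervals merely makes rigorous what the paper handles by assuming $\int_0^{r_0}\psi(s)s^{-1-1/\gamma}\,ds<+\infty$ (``otherwise the Lemma is trivial'') and asserting directly that $F$ is absolutely continuous.
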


\begin{proof} We can assume that
$$\int_0^{r_0} \frac{\psi(s)}{s^{1+\frac{1}{\gamma}}}ds<+\infty,$$
otherwise the Lemma is trivial. Under our hypothesis, the function 
$$F(r):=\frac{\varphi(r)}{r^{\frac{1}{\gamma}}}+\frac{1}{\gamma}\int_0^r \frac{\psi(s)}{s^{1+\frac{1}{\gamma}}}ds$$
is differentiable a.e. and absolutely continuous. A computation gives 
\begin{eqnarray}
F'(r)&=&\frac{\varphi'(r)r^{\frac{1}{\gamma}}-\varphi(r)\frac{1}{\gamma}r^{\frac{1}{\gamma}-1}}{r^{\frac{2}{\gamma}}}+\frac{1}{\gamma}\frac{\psi(r)}{r^{1+\frac{1}{\gamma}}} \notag\\
&=& \frac{\varphi'(r)-\varphi(r)\frac{1}{\gamma}r^{-1}}{r^{\frac{1}{\gamma}}}+\frac{1}{\gamma}\frac{\psi(r)}{r^{1+\frac{1}{\gamma}}} \notag
\end{eqnarray}
thus  \eqref{gronwall1} yields
$$r\gamma F'(r)=\frac{r\gamma \varphi'(r)-\varphi(r)+\psi(r)}{r^{\frac{1}{\gamma}}}\geq 0,$$
which implies that $F$ is nondecresing.
\end{proof}

We now prove the monotonicity Lemma, which is inspired by  Alt, Caffarelli and Friedman \cite[Lemma 5.1.]{acf}. The following  statement and its proof, is  an easy variant of \cite[Lemma 15]{lms}, where the same estimate is performed on Dirichlet eigenfunctions of the Laplace operator. We decided to write the full details in order to enlighten the role of the second member $f$ in the inequalities.

\begin{lemma} \label{monot} Let $\Omega \subseteq \R^N$ be a bounded domain and let $u$ be a solution for the problem $(P1)$.  Given  $x_0 \in \overline{\Omega}$ and a radius $r>0$, we denote by $\Omega_r^+:=B(x_0,r)\cap \Omega$, by $S_r^+:=\partial B(x_0,r) \cap \Omega$ and by $\sigma(r)$ the first Dirichlet eigenvalue of the Laplace operator on the spherical domain $S_r^+$. If there are constants $r_0>0$ and $\sigma^{\ast} \in ]0, N-1[$ such that
\begin{eqnarray}
    \inf_{0< r < r_0} (r^{2}\sigma(r)) \ge \sigma^*, \label{hyp1}
\end{eqnarray} 
then the function 
\begin{equation}
\label{e:f}r\mapsto \Bigg(\frac{1}{r^\beta}\int_{\Omega_r^+} \frac{|\nabla u |^2}{|x-x_0|^{N-2}} \;dx \Bigg)+\beta\int_0^r \frac{\psi(s)}{s^{1+\beta}}ds
\end{equation}
is non decreasing on $]0,r_0[$, where $\beta \in ]0, 2[$ is given by
$$
     \beta=\sqrt{(N-2)^2 +4\sigma^*}-(N-2) 
$$
and 
$$\psi(s):=\int_{\Omega_s^+}|u f| |x-x_0|^{2-N}dx.$$
We also have the bound
\begin{eqnarray}
         \int_{\Omega_{r_0/2}^+} \frac{|\nabla u |^2}{|x-x_0|^{N-2}} \;dx \leq   
          C (N,r_0,\beta) \|\nabla u\|_{L^2(\Omega)}^2+\psi(r_0).
                       \label{boundM}
\end{eqnarray}
\end{lemma}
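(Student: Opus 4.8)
The plan is to combine the integration-by-parts inequality of Lemma \ref{ipp} with the Gronwall-type Lemma \ref{Gronwall}, after setting up the correct differential inequality for the function
$$\varphi(r):=\int_{\Omega_r^+}\frac{|\nabla u|^2}{|x-x_0|^{N-2}}\,dx.$$
First I would observe that, by the coarea formula, $\varphi$ is absolutely continuous and its derivative is
$$\varphi'(r)=r^{2-N}\int_{S_r^+}|\nabla u|^2\,dS\quad\text{for a.e. }r,$$
so that the surface integral appearing on the right of \eqref{mono00} can be related to $\varphi'(r)$. The heart of the argument is to bound the boundary term $r^{2-N}\int_{S_r^+}u\,\partial_\nu u\,dS+\tfrac{N-2}{2}r^{1-N}\int_{S_r^+}u^2\,dS$ in \eqref{mono00} from above by $\gamma r\,\varphi'(r)$ for a suitable constant $\gamma$. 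This is exactly where the spherical eigenvalue hypothesis \eqref{hyp1} enters: on the spherical cap $S_r^+$ the function $u$ vanishes on $\partial S_r^+$ (because $u=0$ on $\partial\Omega$), so the Poincaré inequality on $S_r^+$ gives
$$\sigma(r)\int_{S_r^+}u^2\,dS\le\int_{S_r^+}|\nabla_\tau u|^2\,dS,$$
where $\nabla_\tau$ is the tangential gradient. Using Cauchy-Schwarz on the mixed term $\int_{S_r^+}u\,\partial_\nu u\,dS$ together with this Poincaré inequality, and optimizing, one produces a bound of the form $\psi$ (the forcing term) plus a multiple of $r\,\varphi'(r)$; the optimal constant is dictated by the quadratic whose root is precisely the stated $\beta$, via the relation $\gamma=1/\beta$ and $\beta^2+2(N-2)\beta-4\sigma^*=0$.

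Once the differential inequality $\varphi(r)\le \gamma r\,\varphi'(r)+\psi(r)$ is established with $\gamma=1/\beta$, I would apply Lemma \ref{Gronwall} directly: its conclusion states that
$$r\mapsto\frac{\varphi(r)}{r^{1/\gamma}}+\frac{1}{\gamma}\int_0^r\frac{\psi(s)}{s^{1+1/\gamma}}\,ds=\frac{\varphi(r)}{r^{\beta}}+\beta\int_0^r\frac{\psi(s)}{s^{1+\beta}}\,ds$$
is nondecreasing on $]0,r_0[$, which is exactly \eqref{e:f}. The only care needed is to verify the hypotheses of Lemma \ref{Gronwall}, namely that $\psi$ is continuous and $\varphi$ is absolutely continuous on $(0,r_0)$; the former follows from the absolute continuity of the integral defining $\psi$ and the integrability of $uf|x-x_0|^{2-N}$ (here one uses $u\in L^2$, $f\in L^q$, and that the singular weight is integrable since $p_0>N/2$), and the latter from the coarea computation above.

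The bound \eqref{boundM} then follows from the monotonicity by evaluating the nondecreasing function at $r=r_0/2$ and comparing with its value near a slightly larger radius, or equivalently by integrating the monotone quantity: since \eqref{e:f} is nondecreasing, its value at $r_0/2$ is controlled by its value at some $r\in(r_0/2,r_0)$, and one bounds $\varphi(r)\le r^{2-N}\!\int_{\Omega}|\nabla u|^2\,dx$ trivially away from the singularity, absorbing the weight into a constant $C(N,r_0,\beta)$, while the integral term contributes at most $\psi(r_0)$. I expect the \textbf{main obstacle} to be the careful derivation of the sharp differential inequality from \eqref{mono00} — specifically, correctly combining the Poincaré inequality on the spherical cap with the Cauchy-Schwarz estimate on the cross term so that the eigenvalue constant $\sigma^*$ produces precisely the exponent $\beta$ through the quadratic relation, rather than a suboptimal constant. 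The passage $\sigma(r)\to\sigma^*$ via \eqref{hyp1} must be handled as a one-sided bound (using $r^2\sigma(r)\ge\sigma^*$) throughout, which is what makes the resulting inequality hold for \emph{all} $r<r_0$ with the uniform constant $\beta$.
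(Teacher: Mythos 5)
Your proof of the monotonicity statement follows the paper's argument essentially step for step: the integration-by-parts inequality of Lemma \ref{ipp}, the Poincar\'e inequality $\sigma(r)\int_{S_r^+}u^2\,dS\le\int_{S_r^+}|\nabla_\tau u|^2\,dS$ coming from \eqref{hyp1} (legitimate since $u$ vanishes on $\partial\Omega$, hence on $\partial S_r^+$), Cauchy--Schwarz on the cross term with an optimized parameter leading to the quadratic $\beta^2+2(N-2)\beta-4\sigma^*=0$, the identification $\varphi'(r)=r^{2-N}\int_{S_r^+}|\nabla u|^2\,dS$, and finally Lemma \ref{Gronwall} with $\gamma=1/\beta$. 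That half is correct and is exactly the paper's route.

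The proof of \eqref{boundM}, however, has a genuine gap. You claim one can bound $\varphi(r)\le r^{2-N}\int_\Omega|\nabla u|^2\,dx$ ``trivially away from the singularity'', but for $N\ge 3$ the weight satisfies $|x-x_0|^{2-N}\ge r^{2-N}$ on $\Omega_r^+$, so this inequality runs in the \emph{wrong} direction: the weighted integral dominates the unweighted one, and the contribution near $x_0$ cannot be absorbed into a constant. This is precisely the difficulty flagged in the paper's second remark after the lemma: it is not known that $\nabla u\in L^p(\Omega)$ for some $p>2$, so there is no direct way to control $\int_{\Omega_r^+}|\nabla u|^2|x-x_0|^{2-N}\,dx$ by $\|\nabla u\|_{L^2(\Omega)}^2$. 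Moreover, monotonicity of \eqref{e:f} alone can never yield \eqref{boundM}, since it only compares the weighted quantity with itself at larger radii and gives no absolute bound. The missing idea is to return to the differential inequality $\varphi(r)\le\gamma r\varphi'(r)+\psi(r)$ (the paper's \eqref{mono9}) and choose, by a Chebyshev/mean-value argument, a radius $r_1\in(r_0/2,r_0)$ at which the slice integral is below average, i.e.\ $\int_{S_{r_1}^+}|\nabla u|^2\,dS\le\frac{2}{r_0}\int_{\Omega_{r_0}^+}|\nabla u|^2\,dx\le\frac{2}{r_0}\|\nabla u\|_{L^2(\Omega)}^2$. Plugging this into the differential inequality bounds $\varphi(r_1)$ --- singular weight included --- by $C(N,r_0,\beta)\|\nabla u\|_{L^2(\Omega)}^2+\psi(r_0)$, and $\varphi(r_0/2)\le\varphi(r_1)$ by inclusion of domains. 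Without this selection of a good radius, your argument for \eqref{boundM} does not close.
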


\begin{remark} Of course the Lemma is interesting only when 
\begin{eqnarray}
\int_0^{r_0} \frac{\psi(s)}{s^{1+\beta}}ds<+\infty. \label{hyppp}
\end{eqnarray}
This will be satisfied if $u$ and $f$ are in some $L^p$ spaces with  suitable exponents, as will be shown in Lemma \ref{computation}.
\end{remark}

\begin{remark} Notice that it is not known in general whether $\nabla u \in L^p(\Omega)$ for some $p>2$ and therefore  it is not obvious to find a bound for the left hand side of \eqref{boundM}.
\end{remark}

\begin{proof} 
We assume without lose of generality that $x_0=0$ and to simplify notation we denote by $B_r$ the ball $B(x, r)$.

By Lemma \ref{ipp} we know that for a.e. $r>0$,
\begin{eqnarray}
\int_{\Omega_r^+}|\nabla u|^2 |x|^{2-N}dx &\leq & r^{2-N}\int_{S_r^+} 2u \frac{\partial u}{\partial \nu}dS +\frac{(N-2)}{2}r^{1-N}\int_{S^+_r} u^2dS \notag \\
&\quad & \quad \quad + \int_{\Omega^+_r}u f |x|^{2-N} dx. \label{mono00}
\end{eqnarray}

Let us define 
\begin{eqnarray}
\psi(r):=\int_{\Omega_r^+}|u f| |x|^{2-N}dx  \label{mono8}
\end{eqnarray}
and assume that \eqref{hyppp} holds (otherwise there is nothing to prove).

Next, we point out that the definition of $\sigma^\ast$ implies that
\begin{eqnarray}
 \int_{S_r^+} u^2dS  \leq  \frac{1}{\sigma^*}r^2 \int_{S_r^+} |\nabla_{\tau}u |^2dS \qquad 
 r\in \, ]0,r_0[, \label{mono4}
 \end{eqnarray}
where $\nabla_{\tau}$ denotes the tangential gradient on the sphere. Also, let $\alpha >0$ be a parameter that will be fixed later, then by combining Cauchy-Schwarz inequality, \eqref{mono4} and the inequality $\displaystyle{ab\leq \frac{\alpha} {2}a^2 + \frac{1}{2 \alpha} b^2}$, we get
\begin{eqnarray} 
\Bigg|\int_{S_r^+} u \frac{\partial u}{\partial \nu}dS \Bigg|&\leq& \Bigg(\int_{S^+_r} u^2 dS\Bigg)^\frac{1}{2} \Bigg(\int_{S^+_r} |\frac{\partial u}{\partial \nu}|^2dS \Bigg)^\frac{1}{2} \notag \\
&\leq &  \frac{r}{\sqrt{\sigma^*}} \Bigg(\int_{S^+_r} |\nabla_\tau u|^2dS \Bigg)^\frac{1}{2} \Bigg(\int_{S^+_r} |\frac{\partial u}{\partial \nu}|^2dS \Bigg)^\frac{1}{2} \notag \\
&\leq &  \frac{r}{\sqrt{\sigma^*}}  \Bigg( \frac{\alpha}{2}\int_{S^+_r} |\nabla_\tau u|^2dS + \frac{1}{2\alpha}\int_{S^+_r} |\frac{\partial u}{\partial \nu}|^2dS \Bigg). \label{mono5}
\end{eqnarray}
Hence, 
\begin{equation}
\label{e:rhs}
\begin{split}
          r^{2-N}\int_{S_r^+} 2u \frac{\partial u}{\partial \nu}dS +(N-2)r^{1-N}\int_{S^+_r} u^2dS  & \leq 
        r^{2-N} \frac{2 r }{\sqrt{\sigma^\ast}} \left[ \frac{\alpha}{2}  
        \int_{S_r^+}  |\nabla_{\tau} u |^2 dS+ 
        \frac{1}{2\alpha}\int_{S^+_r} |\frac{\partial u}{\partial \nu}|^2dS    \right]   \\
        & \quad + (N-2) r^{1-N} \frac{1}{\sigma^\ast} r^2
         \int_{S_r^+}  |\nabla_{\tau} u |^2 dS \\
\end{split}
\end{equation}
 $$\leq r^{3-N} \left[ \left( \frac{\alpha}{\sqrt{\sigma^{\ast}}}  + \frac{N-2}{\sigma^\ast }\right) \int_{S_r^+}  |\nabla_{\tau} u |^2 dS +\frac{1}{\alpha \sqrt{\sigma^\ast}} 
         \int_{S_r^+} |\frac{\partial u}{\partial \nu}|^2 dS   \right]. $$

Next, we choose $\alpha>0$ in such a way that
$$\frac{\alpha}{\sqrt{\sigma^*}} + \frac{N-2}{\sigma^*}=\frac{1}{\alpha\sqrt{\sigma^*}},$$
namely 
$$\alpha = \frac{1}{2\sqrt{\sigma^*}}\big[\sqrt{(N-2)^2 +4\sigma^*}-(N-2) \big].$$
Hence, by combining~\eqref{mono00},~\eqref{mono4} and~\eqref{e:rhs}  
we finally get
\begin{eqnarray}
\int_{\Omega^+_r}|\nabla u|^2 |x|^{2-N}dx \leq  r^{3-N} \gamma(N,\sigma^*) \int_{S_r^+}|\nabla u|^2dS +  \psi(r) \label{mono9},
\end{eqnarray}
where
$$\gamma(N,\sigma^*)=\Big[\sqrt{(N-2)^2 +4\sigma^*}-(N-2)\Big]^{-1}.$$

Let us set 
$$
    \varphi(r):=\int_{\Omega_r^+}|\nabla u|^2 |x|^{2-N}dx
$$ 
and  observe that 
$$
     \varphi'(r)=r^{2-N} \int_{S_r^+}|\nabla u|^2
     \quad a.e. \, r\in \, ]0,r_0[\, , 
$$  
hence~\eqref{mono9} implies that
\begin{eqnarray}
\varphi(r) \leq    \gamma r \varphi'(r) +  \psi(r),\label{mono10}
\end{eqnarray}
with $\gamma=\gamma(N,\sigma^*)$.

But now Lemma \ref{Gronwall} exactly says that  the function 
\begin{equation}
\label{e:f}r\mapsto \frac{1}{r^\beta}\int_{\Omega_r^+} \frac{|\nabla u |^2}{|x-x_0|^{N-2}} \;dx+\beta\int_0^r \frac{\psi(s)}{s^{1+\beta}}ds
\end{equation}
is non decreasing on $(0,r_0)$, where $\beta \in (0, 2)$ is given by
$$
     \beta=\frac{1}{\gamma}=\sqrt{(N-2)^2 +4\sigma^*}-(N-2),
$$
which proves the monotonicity result.

To finish the proof of the Lemma it remains to establish ~\eqref{boundM}.  For this purpose, we start by finding a radius $r_1\in (r_0/2,r_0)$ such that  $\int_{S_{r_1}^+} |\nabla u|^2 dS$ is less than average, which means 
$$\int_{S_{r_1}^+} |\nabla u|^2 dS\leq \frac{2}{r_0} \int_{\Omega_{r_0}^+}|\nabla u|^2\;dx\leq \frac{2}{r_0}\|\nabla u\|^2_{L^2(\Omega)}.$$
By combining~\eqref{mono9} with the fact that $r_0/2\leq r_1\leq r_0$ we infer that
\begin{eqnarray}
\int_{\Omega_{r_1}^+}|\nabla u|^2 |x|^{2-N}dx &\leq& C (N,r_0,\beta) \|\nabla u\|^2_{L^2(\Omega)}+  \psi(r_1) \notag \\
&\leq & C (N,r_0,\beta) \|\nabla u\|^2_{L^2(\Omega)}+  \psi(r_0).
\end{eqnarray}
It follows that
\begin{eqnarray}
\int_{\Omega_{r_0/2}^+} \frac{|\nabla u |^2}{|x|^{N-2}} \;dx  &\leq& \int_{\Omega_{r_1}^+} \frac{|\nabla u |^2}{|x|^{N-2}} \;dx  \notag \\
 &\leq & C (N,r_0,\beta) \|\nabla u\|_{L^2(\Omega)}^2+\psi(r_0),\notag 
 \end{eqnarray}
and \eqref{boundM} is proved.
\end{proof}

\section{An elementary computation}

In order to apply Lemma  \ref{monot}, the first thing to check is that \eqref{hyppp} holds. The purpose of the following Lemma is to prove that it is the case when $u$ and $f$ are in suitable $L^p$ spaces.

\begin{lemma}\label{computation} Let $\Omega\subset \R^N$ be an arbitrary domain and let  $p_0>\frac{N}{2}$. Then for any $g \in L^{p_0}(\Omega)$, denoting
$$
\psi(r):=\int_{B(0,r)}|g| |x|^{2-N}dx ,\label{mono8Lem}
$$
we have 
\begin{eqnarray}
|\psi(r)|\leq C(N,p_0)\|g\|_{p_0} r^{\frac{2p_0-N}{p_0}} .\label{decayLem}
\end{eqnarray}
As a consequence 
$$
\int_0^{r_0} \frac{\psi(s)}{s^{1+\beta}}ds<+\infty 
$$
for any $\beta>0$ satisfying 
\begin{eqnarray}
\beta< \frac{2p_0-N}{p_0}. \label{defbetaLem}
\end{eqnarray}
\end{lemma}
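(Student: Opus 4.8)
The plan is to bound $\psi(r)$ by a direct application of H\"older's inequality, treating $|g|$ as the $L^{p_0}$ factor and the singular weight $|x|^{2-N}$ as the complementary factor, and then to evaluate the weight integral in polar coordinates. First I would write, with $p_0'=\frac{p_0}{p_0-1}$ the conjugate exponent,
$$\psi(r)=\int_{B(0,r)}|g|\,|x|^{2-N}\,dx\leq \|g\|_{L^{p_0}(\Omega)}\left(\int_{B(0,r)}|x|^{(2-N)p_0'}\,dx\right)^{1/p_0'}.$$

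Next I would compute the remaining integral using polar coordinates, so that
$$\int_{B(0,r)}|x|^{(2-N)p_0'}\,dx = \omega_{N-1}\int_0^r \rho^{(2-N)p_0'+N-1}\,d\rho,$$
where $\omega_{N-1}$ denotes the measure of the unit sphere. The key point to check here is that the radial exponent satisfies $(2-N)p_0'+N-1>-1$, which is exactly equivalent to $p_0'<\frac{N}{N-2}$ (for $N\geq 3$; the case $N=2$ is trivial since then $2-N=0$), and this in turn is equivalent to the hypothesis $p_0>\frac{N}{2}$ through $\frac{1}{p_0'}=1-\frac{1}{p_0}>\frac{N-2}{N}$. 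Under this condition the radial integral converges and equals a constant times $r^{(2-N)p_0'+N}$; raising to the power $1/p_0'$ and simplifying the exponent,
$$\frac{(2-N)p_0'+N}{p_0'}=2-N+\frac{N}{p_0'}=2-\frac{N}{p_0}=\frac{2p_0-N}{p_0},$$
yields the claimed estimate \eqref{decayLem} with $C(N,p_0)=\big(\omega_{N-1}/((2-N)p_0'+N)\big)^{1/p_0'}$.

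Finally, for the consequence I would simply insert the power bound just obtained and reduce to a one-dimensional power integral:
$$\int_0^{r_0}\frac{\psi(s)}{s^{1+\beta}}\,ds\leq C(N,p_0)\|g\|_{p_0}\int_0^{r_0}s^{\frac{2p_0-N}{p_0}-1-\beta}\,ds,$$
which converges precisely when $\frac{2p_0-N}{p_0}-1-\beta>-1$, i.e.\ when $\beta<\frac{2p_0-N}{p_0}$, as stated in \eqref{defbetaLem}. The whole argument is elementary; the only point requiring genuine care is verifying that the hypothesis $p_0>\frac{N}{2}$ is exactly the threshold making the weight $|x|^{(2-N)p_0'}$ locally integrable, which is what simultaneously drives the power of $r$ in \eqref{decayLem} and the integrability range for $\beta$.
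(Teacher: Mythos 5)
Your proof is correct and follows essentially the same route as the paper: H\"older's inequality with the conjugate exponent of $p_0$ (which the paper calls $m$ and you call $p_0'$), an explicit polar-coordinate evaluation of $\int_{B(0,r)}|x|^{(2-N)p_0'}dx$ under the condition $p_0'<\frac{N}{N-2}$ equivalent to $p_0>\frac{N}{2}$, and a one-dimensional power integral for the consequence. Your exponent bookkeeping, including the simplification to $r^{\frac{2p_0-N}{p_0}}$ and the convergence threshold $\beta<\frac{2p_0-N}{p_0}$, matches the paper's computation exactly.
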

\begin{proof} First, we observe that  $|x|^{2-N}\in L^m$ for any $m$ that satisfies
$$(N-2)m< N \Rightarrow  m < \frac{N}{N-2}. $$
Moreover a computation gives that under this condition,
\begin{eqnarray}
\||x|^{2-N}\|_{L^m(B(0,r))}^m=\int_{B(0,r)}|x|^{(2-N)m}= C(N,m) r^{N-m(N-2)}. \label{computeLem}
\end{eqnarray}

Let us define $m$ as being the conjugate exponent of $p_0$, namely
$$\frac{1}{m}:=1-\frac{1}{p_0}.$$
Then the hypothesis $p_0>\frac{N}{2}$ implies that $m < \frac{N}{N-2}$, and by use of  H\"older inequality and by \eqref{computeLem} we can estimate
$$|\psi(r)|\leq  \|g\|_{p_0} \|  |x|^{2-N}\|_{L^m(B(0,r))}\leq C(N,m) \|g\|_{p_0} r^{\frac{N-m(N-2)}{m}},$$
or equivalently in terms of $p_0$,
\begin{eqnarray}
|\psi(r)|\leq C(N,p_0)\|g\|_{p_0} r^{\frac{2p_0-N}{p_0}} .\label{decay0Lem}
\end{eqnarray}
The conclusion of the Lemma follows directly from \eqref{decay0Lem}. 
\end{proof}

\section{Interior estimate}

We come now to our first application of Lemma \ref{monot}, which  is a decay estimate for the energy at interior points, for arbitrary domains.

\begin{proposition}
\label{propdecayDir} 
Let $p,q,p_0>1$ be some exponents satisfying $\frac{1}{p}+\frac{1}{q}=\frac{1}{p_0}$ and $p_0>\frac{N}{2}$. Let $\beta>0$ be any given exponent such that 
\begin{eqnarray}
\beta< \frac{2p_0-N}{p_0}. \label{defbeta00}
\end{eqnarray}
Let $\Omega \subseteq \R^N$  be  any domain,  $x_0 \in \Omega$, and let $u$ be a solution for the problem $(P1)$ in $\Omega$ with $u\in L^p(\Omega)$ and $f\in L^q(\Omega)$. Then  
\begin{eqnarray}
\label{e:bdecay1}
           \int_{B(x_0,r)\cap \Omega}|\nabla u|^2 d x \leq
            C r^{N-2+\beta} \|u\|_p \|f\|_q \; \quad \forall \; r \in (0,dist(x_0,\partial \Omega)/2) \, , \label{dec000}
\end{eqnarray}
with $C= C(N,  \beta,p_0, dist(x_0,\partial \Omega))$.
\end{proposition}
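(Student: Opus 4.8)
The plan is to obtain \eqref{dec000} from a weighted-energy almost-monotonicity at the interior point $x_0$, along the lines of Lemma \ref{monot} but with the spectral input replaced by interior regularity. Normalize $x_0=0$, write $B_r:=B(x_0,r)$ (contained in $\Omega$ for $r<\dist(x_0,\partial\Omega)$), and set
$$\varphi(r):=\int_{B_r}|\nabla u|^2|x|^{2-N}\,dx,\qquad \psi(r):=\int_{B_r}|uf|\,|x|^{2-N}\,dx.$$
The first, purely algebraic, step is the pointwise bound $|x|^{2-N}\ge r^{2-N}$ on $B_r$ (valid since $|x|\le r$ and $N\ge2$), which gives $\int_{B_r}|\nabla u|^2\,dx\le r^{N-2}\varphi(r)$; hence it suffices to prove the weighted estimate $\varphi(r)\le C r^{\beta}\|u\|_p\|f\|_q$.

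Next I dispose of the inhomogeneous term. Applying Lemma \ref{computation} to $g=uf$ and using H\"older's inequality $\|uf\|_{p_0}\le\|u\|_p\|f\|_q$ yields $\psi(r)\le C(N,p_0)\|u\|_p\|f\|_q\,r^{(2p_0-N)/p_0}$. Because \eqref{defbeta00} gives $\beta<(2p_0-N)/p_0$, the weight $\psi(s)s^{-1-\beta}$ is integrable and
$$\int_0^r\frac{\psi(s)}{s^{1+\beta}}\,ds\le C\,\|u\|_p\|f\|_q\,r^{(2p_0-N)/p_0-\beta}.$$

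The heart of the argument is the differential inequality for $\varphi$. Starting from the integration-by-parts inequality of Lemma \ref{ipp} (its proof is insensitive to whether $x_0$ is interior: the term $\int u^2\,\Delta(|x|_\varepsilon^{2-N})$ remains $\le 0$, now also picking up a favorable Dirac mass at $x_0$), I reach
$$\varphi(r)\le r^{2-N}\int_{\partial B_r}u\,\tfrac{\partial u}{\partial\nu}\,dS+\tfrac{N-2}{2}\,r^{1-N}\int_{\partial B_r}u^2\,dS+\psi(r).$$
Here is the main obstacle: at an interior point $\partial B_r$ is a complete sphere, so there is no Dirichlet datum and the spherical Poincar\'e inequality \eqref{mono4} — equivalently the eigenvalue hypothesis \eqref{hyp1} — is outright false for the constant mode, and the scheme of Lemma \ref{monot} cannot be invoked verbatim. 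I would bypass the missing spectral gap by interior regularity. Since $f\in L^{p_0}_{loc}$ with $p_0>N/2$, Calder\'on--Zygmund together with Sobolev--Morrey give $u\in C^{0,\delta}_{loc}$ with $\delta:=2-N/p_0>0$. Subtracting the constant $u(x_0)$ (which alters neither $\nabla u$ nor $\varphi$) we may assume $u(x_0)=0$, so that $\int_{\partial B_r}u^2\,dS\le C[u]_{C^{0,\delta}}^2\,r^{2\delta+N-1}$. Estimating the cross term by Cauchy--Schwarz and Young with a free parameter, the normal-derivative part is absorbed into $\gamma r\varphi'(r)$ for any prescribed $\gamma$ (there being no spectral constraint on $\gamma$ now), while the two spherical integrals leave an extra inhomogeneity of order $r^{2\delta}$. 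This produces
$$\varphi(r)\le \gamma\,r\,\varphi'(r)+\Psi(r),\qquad \Psi(r):=\psi(r)+C\,r^{2\delta}.$$
I then choose $\gamma=1/\beta$ with $\beta<(2p_0-N)/p_0=\delta$.

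Finally I apply Lemma \ref{Gronwall} to this inequality: it shows that $r\mapsto \varphi(r)/r^{\beta}+\beta\int_0^r\Psi(s)s^{-1-\beta}\,ds$ is nondecreasing on $(0,\dist(x_0,\partial\Omega))$. The finiteness required there holds because $\int_0^r\psi\,s^{-1-\beta}$ was bounded in the second step and $\int_0^r s^{2\delta-1-\beta}\,ds<\infty$ thanks to $\beta<\delta<2\delta$. Comparing the monotone quantity at a radius $r<\dist(x_0,\partial\Omega)/2$ with its value at an anchor $\rho\asymp\dist(x_0,\partial\Omega)/2$, and bounding $\varphi(\rho)$ by an interior Calder\'on--Zygmund estimate on $B_\rho$ in terms of the data, I obtain $\varphi(r)\le C r^{\beta}\|u\|_p\|f\|_q$ with $C=C(N,\beta,p_0,\dist(x_0,\partial\Omega))$; together with the first step this is precisely \eqref{dec000}. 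The only delicate point is the third paragraph, where H\"older control of $u$ near $x_0$ takes over the role that the spectral gap \eqref{hyp1} played at boundary points; the rest is the bookkeeping already packaged in Lemmas \ref{ipp}, \ref{Gronwall}, and \ref{computation}.
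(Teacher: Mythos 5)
Your proposal arrives at the right estimate, but at the crucial step it takes a genuinely different route from the paper --- and in doing so it puts its finger on a soft spot in the paper's own argument. The paper proves Proposition \ref{propdecayDir} by invoking Lemma \ref{monot} verbatim at the interior point: it asserts in \eqref{heyp} that the first eigenvalue of the ``spherical Dirichlet Laplacian'' on the full unit sphere equals $N-1$, chooses $\sigma^*<N-1$ with $\beta=\sqrt{(N-2)^2+4\sigma^*}-(N-2)$, and then bounds the monotone quantity at the anchor radius $r_1/2$ via \eqref{boundM}, the bound $\psi(r_1)\leq C\|u\|_p\|f\|_q$ from \eqref{decayLem}, and the global identity $\|\nabla u\|^2_{L^2(\Omega)}=\int_\Omega uf\,dx$. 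You are right that at an interior point $S_r^+$ is the \emph{whole} sphere, so $N-1$ is only the first \emph{nonzero} eigenvalue and the Poincar\'e inequality \eqref{mono4} fails on the constant mode (at boundary points $u|_{S_r^+}$ vanishes on $\partial\Omega\cap\partial B_r$, so the variational characterization applies and the step is sound; here it is not). Your substitute --- normalizing $u(x_0)=0$, using interior Calder\'on--Zygmund plus Morrey to get $u\in C^{0,\delta}_{loc}$ with $\delta=2-N/p_0=(2p_0-N)/p_0$, so that the two spherical zero-order terms become an extra forcing of order $r^{2\delta}$, absorbable in Lemma \ref{Gronwall} since $\beta<\delta<2\delta$, with $\gamma=1/\beta$ now unconstrained by any spectral gap --- is a legitimate repair, and the exponents mesh exactly. (Two small bookkeeping points: for $p_0\geq N$ the Morrey exponent caps below $1$, so one should take $\delta<1$ with $2\delta>\beta$, which is possible since $\beta<2$; and the finiteness of the anchor value $\varphi(\rho)$ follows from $u\in W^{2,p_0}_{loc}$, hence $\nabla u\in L^s_{loc}$ for some $s>N$.)

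The one place where your proof falls short of the statement as written is the form of the constant. The H\"older seminorm enters your inhomogeneity \emph{squared}, and interior Calder\'on--Zygmund controls $[u]_{C^{0,\delta}}$ by a sum of norms of the data, so your route delivers a right-hand side of the form $C\,r^{N-2+\beta}\bigl(\|u\|_p+\|f\|_q\bigr)^2$ rather than the product $\|u\|_p\|f\|_q$ in \eqref{dec000}; since $u$ and $f$ cannot be rescaled independently of each other, the sum form is genuinely weaker and cannot be converted into the product a posteriori. The paper obtains the product because every quantity it uses ($\psi$, and $\|\nabla u\|_2^2=\int_\Omega uf\,dx$) is bilinear in $(u,f)$ --- though, as noted, its interior spectral step is the questionable one. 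For the downstream application in Theorem \ref{main} this difference is immaterial, since only a bound of the form $C(N,r_0,\alpha,p_0,\|u\|_p,\|f\|_q)$ is ever used; but you should state the weaker constant honestly rather than claim the product form in your final paragraph.
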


\begin{proof} We assume that $x_0 =0$ and we define $r_1:=dist(x_0,\partial \Omega)$ in such a way that  $B(x,r)$ is totally contained inside $\Omega$ for $r\leq r_1$ and  under the notation of Lemma \ref{monot}, $\Omega_r^+=B(0,r)$ and by $S_r^+=\partial B(0,r)$ for any $r\leq r_1$. Under our hypothesis and in virtue of Lemma \ref{computation} that we apply with $g=uf$, we know that   \eqref{hyppp} holds for any $\beta>0$ that satisfies
\begin{eqnarray}
\beta< \frac{2p_0-N}{p_0}. \label{defbeta1}
\end{eqnarray}
Notice that $ \frac{2p_0-N}{p_0}>0$, because $p_0> N/2$. 

In the sequel we chose any exponent   $\beta>0$ satisfying \eqref{defbeta1}, so that \eqref{hyppp} holds and moreover Lemma \ref{computation} says that 
\begin{eqnarray}
\int_0^{r_1} \frac{\psi(s)}{s^{1+\beta}}ds &\leq&  C(N,p_0)\|u\|_p\|f\|_q\int_0^{r_1}s^{\frac{2p_0-N}{p_0}-1-\beta} ds \notag \\
&\leq & C(N,p_0,\beta)\|u\|_p\|f\|_q  .\label{estimm2}
\end{eqnarray}

We are now ready to prove \eqref{dec000}, $\beta$ still being a fixed exponent satisfying \eqref{defbeta1}. We recall that the first eigenvalue of the spherical Dirichlet Laplacian on the unit sphere is equal to $N-1$, thus  hypothesis \eqref{hyp1} in the present context reads
\begin{eqnarray}
    \inf_{0< r < r_1} (r^{2}\sigma(r)) =N-1, \label{heyp}
    \end{eqnarray} 
so that \eqref{hyp1} holds for any $\sigma^*< N-1$. Let us choose $\sigma^*$ exactly equal to the one that satisfies
$$
     \beta=\sqrt{(N-2)^2 +4 \sigma^*}-(N-2) .
$$
one easily verifies that $\sigma^*< N-1$ because of \eqref{defbeta1}.

 As a consequence, we are in position to apply Lemma \ref{monot} which ensures that, if $u$ is a solution for the problem $(P1)$, then the function in \eqref{e:f}
is  non decreasing. In particular, by monotonicity we know that for every $r\leq r_1$,
\begin{eqnarray}
\frac{1}{r^{N-2+\beta}}\int_{\Omega_r^+}|\nabla u |^2 \;dx&\leq& \Bigg(\frac{1}{r^\beta}\int_{\Omega_r^+} \frac{|\nabla u |^2}{|x-x_0|^{N-2}} \;dx \Bigg)+ \beta \int_0^{r} \frac{\psi(s)}{s^{1+\beta}}ds \notag\\
&\leq&  \Bigg(\frac{1}{(r_1/2)^\beta}\int_{\Omega_{r_1/2}^+} \frac{|\nabla u |^2}{|x-x_0|^{N-2}} \;dx \Bigg)+ \beta\int_0^{r_1/2} \frac{\psi(s)}{s^{1+\beta}}ds,
\end{eqnarray}
and we conclude that for every $r\leq r_1/2$,
$$\int_{\Omega_r^+}|\nabla u |^2 \;dx\leq K r^{N-2+\beta},$$
with 
$$K=\Bigg(\frac{1}{(r_1/2)^\beta}\int_{\Omega_{r_1/2}^+} \frac{|\nabla u |^2}{|x-x_0|^{N-2}} \;dx \Bigg)+ \beta\int_0^{r_1/2} \frac{\psi(s)}{s^{1+\beta}}ds.$$
Let us now provide an estimate on $K$.  To estimate the first term in $K$ we use \eqref{boundM}  to write
\begin{eqnarray}
         \int_{\Omega_{r_1/2}^+} \frac{|\nabla u |^2}{|x-x_0|^{N-2}} \;dx &\leq&   
          C (N,r_1,\beta) \|\nabla u\|_{L^2(\Omega)}^2+\psi(r_1)     \label{boundMM1} 
               \end{eqnarray}
  Then we use  \eqref{decayLem} to estimate 
  $$\psi(r_1) \leq C(N,p_0,r_1)\|u\|_p \|f\|_q,$$
  and from the equation satisfied by $u$ we get
  $$\|\nabla u\|_{L^2(\Omega)}^2= \int_{\Omega}uf dx \leq \|u\|_p \|f\|_q$$
 so that in total we have 
 \begin{eqnarray}
         \int_{\Omega_{r_1/2}^+} \frac{|\nabla u |^2}{|x-x_0|^{N-2}} \;dx &\leq&   
          C(N,r_0,\beta,p_0) \|u\|_p\|f\|_q . \notag
                        \end{eqnarray}  
Finally, the last estimate together with \eqref{estimm2} yields
$$K\leq C(N,r_1,\beta,p_0) \|u\|_p\|f\|_q,$$
and this ends the proof of the Proposition.
\end{proof}


\section{Boundary estimate}

We now use Lemma \ref{monot} again to provide an estimate on the energy at  boundary points, this time for Reifenberg-flat domains.

\begin{proposition}
\label{propdecayDir2} 
Let $p,q,p_0>1$ be some exponents satisfying $\frac{1}{p}+\frac{1}{q}=\frac{1}{p_0}$ and $p_0>\frac{N}{2}$. Let $\beta>0$ be any given exponent such that 
\begin{eqnarray}
\beta< \frac{2p_0-N}{p_0}. \label{defbeta0}
\end{eqnarray}
Then one can find an $\varepsilon = \varepsilon(N,\beta)$  such that the following holds. Let $\Omega \subseteq \R^N$  be  any $(\varepsilon, r_0)$-Reifenberg-flat domain for some $r_0>0$, let  $x_0 \in \partial \Omega$ and let $u$ be a solution for the problem $(P1)$ in $\Omega$ with $u\in L^p(\Omega)$ and $f\in L^q(\Omega)$. Then  
\begin{eqnarray}
\label{e:bdecay1}
           \int_{B(x_0,r)\cap \Omega}|\nabla u|^2 d x \leq
            C r^{N-2+\beta} \|u\|_p \|f\|_q \; \quad \forall \; r \in (0,r_0/2) \, , \label{dec00}
\end{eqnarray}
with $C= C(N, r_0, \beta,p_0)$.
\end{proposition}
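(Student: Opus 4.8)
The plan is to follow the proof of the interior estimate (Proposition \ref{propdecayDir}) almost verbatim, the only genuinely new ingredient being the verification of hypothesis \eqref{hyp1} of the monotonicity Lemma \ref{monot} at a boundary point. Given a target exponent $\beta$ satisfying \eqref{defbeta0}, I first associate to it the value $\sigma^*\in(0,N-1)$ determined by $\beta=\sqrt{(N-2)^2+4\sigma^*}-(N-2)$, equivalently $\sigma^*=\tfrac14\beta(\beta+2(N-2))$. Since $p_0>N/2$ forces $\beta<2-\tfrac{N}{p_0}<2$, one checks that this $\sigma^*$ satisfies $\sigma^*<N-1$, so it is an admissible value in \eqref{hyp1}. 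Once I establish $\inf_{0<r<r_0} r^2\sigma(r)\ge\sigma^*$, Lemma \ref{monot} applies and the remainder of the argument is identical to the interior case.

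The heart of the matter is therefore the lower bound $r^2\sigma(r)\ge\sigma^*$ for all $r\in(0,r_0)$, where $\sigma(r)$ is the first Dirichlet eigenvalue of the spherical domain $S_r^+=\partial B(x_0,r)\cap\Omega$. At an interior point this quantity equals $N-1$ exactly because $S_r^+$ is a full sphere; here the point is to show that it degrades only by an amount controlled by $\varepsilon$. I would use Reifenberg-flatness as follows. With $x_0=0$, condition (i) of Definition \ref{defreif} supplies for each $r\le r_0$ a hyperplane $P(0,r)$ through $0$, with unit normal $\nu$ pointing into $\Omega$, such that $\partial\Omega\cap B(0,r)$ lies within $\varepsilon r$ of $P(0,r)$. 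Consequently, in $B(0,r)$ the region $\{x:\ \mathrm{dist}(x,P(0,r))>\varepsilon r\}$ contains no boundary point, and the separation condition (ii) pins down which of its two pieces lies in $\Omega$. Hence any point $r\omega\in\Omega\cap\partial B(0,r)$ must satisfy $\langle\omega,\nu\rangle\ge-\varepsilon$; after rescaling to the unit sphere this means $S_r^+\subseteq C_\varepsilon:=\{\omega\in S^{N-1}:\ \langle\omega,\nu\rangle>-\varepsilon\}$, a spherical cap exceeding the open hemisphere by an angular width of order $\varepsilon$.

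By domain monotonicity of the first Dirichlet eigenvalue, $S_r^+\subseteq C_\varepsilon$ gives $r^2\sigma(r)\ge\lambda_1(C_\varepsilon)$, where $\lambda_1(C_\varepsilon)$ is the first Dirichlet eigenvalue of $C_\varepsilon$ on $S^{N-1}$. The first eigenvalue of the open hemisphere is $N-1$ (with eigenfunction the restriction of a linear coordinate), and $\lambda_1(C_\varepsilon)$ depends monotonically and continuously on the cap, with $\lambda_1(C_\varepsilon)\uparrow N-1$ as $\varepsilon\downarrow 0$. Since $\sigma^*<N-1$, I can therefore fix $\varepsilon=\varepsilon(N,\beta)$ small enough that $\lambda_1(C_\varepsilon)\ge\sigma^*$, which yields \eqref{hyp1} uniformly in $r$. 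I expect this eigenvalue-stability step to be the main obstacle: one must quantify the gap $N-1-\lambda_1(C_\varepsilon)$ for fixed $\varepsilon>0$, and honestly justify the containment $S_r^+\subseteq C_\varepsilon$ from conditions (i)--(ii) at every scale $r\le r_0$, including the passage from a normal slab of half-width $\varepsilon r$ in the ball to an angular cap on the sphere.

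Finally, with \eqref{hyp1} in hand I conclude exactly as in Proposition \ref{propdecayDir}. Lemma \ref{computation} applied to $g=uf$ (legitimate since $\|uf\|_{p_0}\le\|u\|_p\|f\|_q$ by H\"older and $p_0>N/2$) shows $\int_0^{r_0}\psi(s)s^{-1-\beta}\,ds<+\infty$ with the quantitative bound coming from \eqref{decayLem}, while Lemma \ref{monot} makes the function in \eqref{e:f} nondecreasing on $(0,r_0)$. Comparing its value at $r\le r_0/2$ with its value at $r_0/2$ gives $\int_{\Omega_r^+}|\nabla u|^2\,dx\le K\,r^{N-2+\beta}$, and I estimate $K$ by combining the bound \eqref{boundM} for $\int_{\Omega_{r_0/2}^+}|\nabla u|^2|x|^{2-N}\,dx$, the decay \eqref{decayLem} applied to $\psi(r_0)$, and the energy identity $\|\nabla u\|_{L^2(\Omega)}^2=\int_\Omega uf\,dx\le\|u\|_p\|f\|_q$ obtained by testing $(P1)$ against $u$. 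This produces $K\le C(N,r_0,\beta,p_0)\|u\|_p\|f\|_q$ and hence \eqref{dec00}.
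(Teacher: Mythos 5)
Your proposal is correct and follows essentially the same route as the paper: verify hypothesis \eqref{hyp1} at boundary points by using Reifenberg-flatness to trap $S_r^+$ inside a spherical cap slightly larger than a hemisphere, invoke monotonicity and continuity of the cap's first Dirichlet eigenvalue to choose $\varepsilon=\varepsilon(N,\beta)$ with $\lambda_1\geq\sigma^*=\frac{\beta}{2}\bigl(\frac{\beta}{2}+N-2\bigr)$, then run Lemma \ref{monot} and conclude exactly as in the interior case. The only cosmetic difference is that you fix $\sigma^*$ from $\beta$ and pick $\varepsilon$ by continuity of $\lambda_1(C_\varepsilon)$ at the hemisphere, whereas the paper first solves $\lambda_1(S_{t^*})=\sigma^*$ for a cap height $t^*(\beta)<0$ and then requires $\varepsilon<|t^*|/2$; your honest flagging of the cap-containment and eigenvalue-stability steps matches the level of detail the paper itself leaves implicit.
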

\begin{proof} As before we assume that  $x_0 =0$ and we denote by $\Omega_r^+:=B(0,r)\cap \Omega$ and by $S_r^+:=\partial B(0,r) \cap \Omega$. To obtain the decay estimate on  $\int_{\Omega_r^+} |\nabla u|^2 d x$ we will follow the proof of Proposition \ref{propdecayDir} : the main difference is that for boundary points, \eqref{heyp} does not hold. This is where Reifenberg-flatness will play a role.

Let  $\beta>0$, be an exponent satisfying \eqref{defbeta0}, so that invoquing Lemma  \ref{computation} we have 
\begin{eqnarray}
\int_0^{r_0} \frac{\psi(s)}{s^{1+\beta}}ds &\leq&   C(N,p_0,\beta)\|u\|_p\|f\|_q  <+\infty.\label{estimm}
\end{eqnarray}

Next, we recall that the first eigenvalue of the spherical Dirichlet Laplacian on a half sphere is equal to $N-1$ (as for the total sphere). For $t \in \, (-1,1)$, let $S_t$ be the spherical cap $S_t:= \partial B(0,1) \cap \{x_N >t\}$ so that $t=0$ corresponds to a half sphere. Let $\lambda_1(S_t)$ be the first Dirichlet eigenvalue in $S_t$. In particular, $t\mapsto \lambda_1(S_t)$ is continuous and monotone in $t$. Therefore, since {and $\lambda_1 (S_t) \to 0$ as $t \downarrow -1$, there  is $t^*(\beta)<0$ such that 
$$
     \beta=\sqrt{(N-2)^2 +4 \lambda_1(t^*)}-(N-2) 
$$

By applying  the definition of Reifenerg flat domain, we infer that, if ${\varepsilon < t^* (\eta) /2}$, then $\partial B(x_0,r) \cap \Omega$ is contained in a spherical cap homothetic to $S_{t^*}$ for every $r \leq r_0$. 
Since the eigenvalues scale of by factor $r^2$ when the domain expands of a factor $1/r$,} by the monotonicity property of the eigenvalues with respect to domains inclusion, we have 
\begin{equation}
\label{e:inf}
\inf_{r<r_0} r^{2}\lambda_1(\partial B(x_0,r) \cap \Omega) \geq \lambda_1(S_{t^*})= \frac{\beta}{2}\Big(\frac{\beta}{2}+N-2\Big).
\end{equation}
 As a consequence, we are in position to apply the monotonicity Lemma (Lemma \ref{monot}) which ensures that, if $u$ is a solution for the problem $(P1)$ and $x_0\in \partial \Omega$, then the function in \eqref{e:f}
is  non decreasing. We then conclude as in the proof of Proposition \ref{propdecayDir}, i.e.  by monotonicity we know that for every $r\leq r_0<1$,
\begin{eqnarray}
\frac{1}{r^{N-2+\beta}}\int_{\Omega_r^+}|\nabla u |^2 \;dx&\leq& \Bigg(\frac{1}{r^\beta}\int_{\Omega_r^+} \frac{|\nabla u |^2}{|x-x_0|^{N-2}} \;dx \Bigg)+ \beta \int_0^{r} \frac{\psi(s)}{s^{1+\beta}}ds \notag\\
&\leq&  \Bigg(\frac{1}{(r_0/2)^\beta}\int_{\Omega_{r_0/2}^+} \frac{|\nabla u |^2}{|x-x_0|^{N-2}} \;dx \Bigg)+ \beta\int_0^{r_0/2} \frac{\psi(s)}{s^{1+\beta}}ds,
\end{eqnarray}
hence for every $r\leq r_0/2$,
$$\int_{\Omega_r^+}|\nabla u |^2 \;dx\leq K r^{N-2+\beta},$$
with, 
$$K=\Bigg(\frac{1}{(r_0/2)^\beta}\int_{\Omega_{r_0/2}^+} \frac{|\nabla u |^2}{|x-x_0|^{N-2}} \;dx \Bigg)+ \beta\int_0^{r_0/2} \frac{\psi(s)}{s^{1+\beta}}ds.$$
Then we estimate $K$ exactly as in the end of the  proof of Proposition  \ref{propdecayDir}, using    \eqref{boundM},  \eqref{decayLem} and \eqref{estimm} to bound
$$K\leq C(N,r_0,\beta,p_0) \|u\|_p\|f\|_q,$$
and this ends the proof of the Proposition.
\end{proof}

\section{Global decay result}

Gathering together Proposition \ref{propdecayDir} and Proposition \ref{propdecayDir2} we deduce the following global result.

\begin{proposition}\label{global}Let $p,q,p_0>1$ be some exponents satisfying $\frac{1}{p}+\frac{1}{q}=\frac{1}{p_0}$ and $p_0>\frac{N}{2}$. Let $\beta>0$ be any given exponent such that 
\begin{eqnarray}
\beta< \frac{2p_0-N}{p_0}. \label{defbeta3}
\end{eqnarray}
Then one can find an $\varepsilon = \varepsilon(N,\beta)$  such that the following holds. Let $\Omega \subseteq \R^N$  be  any $(\varepsilon, r_0)$-Reifenberg-flat domain for some $r_0>0$, and let $u$ be a solution for the problem $(P1)$ in $\Omega$ with $u\in L^p(\Omega)$ and $f\in L^q(\Omega)$. Then  
\begin{eqnarray}
           \int_{B(x,r)\cap \Omega}|\nabla u|^2 d x \leq
            C r^{N-2+\beta} \|u\|_p \|f\|_q \; \quad \forall x \in \overline{\Omega}, \; \forall \; r \in (0,r_0/6) , \label{dec3}
\end{eqnarray}
with $C= C(N, r_0, \beta,p_0)$.
\end{proposition}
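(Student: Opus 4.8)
The plan is to deduce the uniform, distance-independent decay \eqref{dec3} from the boundary decay of Proposition \ref{propdecayDir2} together with the interior monotonicity packaged in Lemma \ref{monot}. Fix $x\in\overline{\Omega}$ and $r\in(0,r_0/6)$, set $d:=\mathrm{dist}(x,\partial\Omega)$, and let $x_0\in\partial\Omega$ be a nearest boundary point, so that $|x-x_0|=d$ (with $x_0=x$, $d=0$ when $x\in\partial\Omega$). I would split into two regimes according to the size of $r$ relative to $d$. In the first regime, $d\le 2r$ (which in particular absorbs every boundary point), the inclusion $B(x,r)\cap\Omega\subseteq B(x_0,3r)\cap\Omega$ holds since $|y-x_0|\le|y-x|+d<r+2r=3r$ for $y\in B(x,r)$. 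As $3r<r_0/2$, Proposition \ref{propdecayDir2} applied at $x_0$ with radius $3r$ gives $\int_{B(x,r)\cap\Omega}|\nabla u|^2\,dx\le\int_{B(x_0,3r)\cap\Omega}|\nabla u|^2\,dx\le C(3r)^{N-2+\beta}\|u\|_p\|f\|_q$, which is \eqref{dec3} up to the harmless factor $3^{N-2+\beta}$.

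In the second regime, $d>2r$, the ball $B(x,d)$ lies inside $\Omega$, so on $(0,d)$ the spherical domain $S_\rho^+$ is a full sphere and \eqref{hyp1} holds with $\rho^2\sigma(\rho)=N-1$. Hence Lemma \ref{monot}, applied at $x$ with reference radius $d$ and with $\sigma^*$ chosen to match the given $\beta$, shows that the functional in \eqref{e:f} is non-decreasing on $(0,d)$. Comparing its values at $r$ and at $d/2$, and using $\int_{B(x,r)}|\nabla u|^2\le r^{N-2}\int_{B(x,r)}|\nabla u|^2|y-x|^{2-N}$ (valid since $|y-x|^{2-N}\ge r^{2-N}$ on $B(x,r)$), I obtain $\int_{B(x,r)}|\nabla u|^2\le r^{N-2+\beta}\,\Phi(d/2)$, where $\Phi(d/2)$ denotes the value of \eqref{e:f} at $d/2$. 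The $\psi$-integral part of $\Phi(d/2)$ is bounded by $C(N,p_0,\beta)\|u\|_p\|f\|_q$ exactly as in \eqref{estimm} via Lemma \ref{computation}, so the whole matter reduces to bounding the weighted Dirichlet term $(d/2)^{-\beta}\int_{B(x,d/2)}|\nabla u|^2|y-x|^{2-N}$ by a constant that does not depend on $d$.

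This is where the main difficulty lies, and where a naive appeal to the interior estimate of Proposition \ref{propdecayDir} fails: its constant degenerates like $d^{2-N-\beta}$ as $d\to0$. The remedy is to use \eqref{boundM} in its \emph{local} form, i.e.\ to rerun its one-line proof (pick a radius $r_1\in(d/2,d)$ on which $\int_{S_{r_1}^+}|\nabla u|^2\,dS$ is below its shell average, then insert \eqref{mono9}) while keeping the local energy $\int_{B(x,d)}|\nabla u|^2$ instead of replacing it by $\|\nabla u\|_{L^2(\Omega)}^2$. This yields, with a \emph{dimensionless} constant, $\int_{B(x,d/2)}|\nabla u|^2|y-x|^{2-N}\le C(N,\beta)\,d^{2-N}\int_{B(x,d)}|\nabla u|^2+\psi(d)$. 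The energy $\int_{B(x,d)}|\nabla u|^2$ is in turn handed back to the first regime: $B(x,d)\subseteq B(x_0,2d)$, and provided $d<r_0/4$ Proposition \ref{propdecayDir2} at $x_0$ gives $\int_{B(x,d)}|\nabla u|^2\le C(2d)^{N-2+\beta}\|u\|_p\|f\|_q$. Substituting, the powers $d^{2-N}\cdot d^{N-2+\beta}=d^\beta$ cancel against the prefactor $(d/2)^{-\beta}$, so $\Phi(d/2)\le C(N,r_0,\beta,p_0)\|u\|_p\|f\|_q$ with no residual dependence on $d$.

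It remains to treat the points with $d\ge r_0/4$ that still fall in the second regime; these are genuinely interior at scale $r_0$. There I would simply use the reference radius $\min(d,r_0)$, which is bounded below by a fixed fraction of $r_0$, and bound the weighted term through \eqref{boundM} together with the global estimate $\|\nabla u\|_{L^2(\Omega)}^2=\int_\Omega uf\,dx\le\|u\|_p\|f\|_q$; the constant is then a fixed $C(N,r_0,\beta,p_0)$ and the $\psi$-contributions are controlled by Lemma \ref{computation} as before. Collecting the two regimes yields \eqref{dec3} for all $x\in\overline{\Omega}$ and $r<r_0/6$. The only genuinely delicate point in the whole argument is the distance-uniform control of the reference term for the shallow interior points ($2r<d<r_0/4$); everything else is the routine bookkeeping of the comparison $B(x,r)\subseteq B(x_0,cr)$ and of the powers of $r_0$.
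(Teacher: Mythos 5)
Your proposal is correct, and its outer skeleton is the same as the paper's: both treat the radii comparable to or larger than $d:=\dist(x,\partial\Omega)$ via the chaining $B(x,r)\subseteq B(x_0,3r)$ ($x_0$ a nearest boundary point, $3r<r_0/2$) together with Proposition \ref{propdecayDir2}, and the small radii via interior monotonicity. The genuine difference is in the interior regime, where you do more than the paper. The paper's proof disposes of the radii $r\le d/2$ by simply citing Proposition \ref{propdecayDir}; but the constant there is $C(N,\beta,p_0,\dist(x_0,\partial\Omega))$ and, as you correctly observe, it degenerates like $d^{2-N-\beta}$ as $d\to 0$ (through the factor $(r_1/2)^{-\beta}$ and the constant in \eqref{boundM} with $r_1=d$), so that citation alone does not justify the uniform constant $C(N,r_0,\beta,p_0)$ asserted in \eqref{dec3} for points near $\partial\Omega$ --- a lacuna in the paper as written. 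Your repair is exactly the right one: compare the monotone quantity \eqref{e:f} at $r$ and at $d/2$, bound the weighted Dirichlet term by the local form of \eqref{boundM} (a below-average radius $r_1\in(d/2,d)$ inserted into \eqref{mono9}, keeping $\frac{2}{d}\int_{B(x,d)}|\nabla u|^2$ rather than $\frac{2}{d}\|\nabla u\|_{L^2(\Omega)}^2$), and then feed the boundary decay back in through $B(x,d)\subseteq B(x_0,2d)$ with $2d<r_0/2$; the powers $d^{-\beta}\cdot d^{2-N}\cdot d^{N-2+\beta}$ cancel exactly, and the $\psi$-contributions are uniform by Lemma \ref{computation} precisely because $\beta<\frac{2p_0-N}{p_0}$. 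Your separate treatment of the deep-interior case $d\ge r_0/4$ (reference radius $\min(d,r_0)\ge r_0/4$, \eqref{boundM} with $\|\nabla u\|_{L^2(\Omega)}^2=\int_\Omega uf\,dx\le\|u\|_p\|f\|_q$) is where the paper's interior proposition genuinely suffices, since the distance-dependence of its constant is harmless there. In short: same decomposition as the paper, but your analysis of the shallow-interior points $2r<d<r_0/4$ supplies the distance-uniformity that the paper's two-line reduction takes for granted, and thereby yields a complete proof of the stated constant.
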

\begin{proof} By Proposition \ref{propdecayDir} and Proposition \ref{propdecayDir2}, we already know that \eqref{dec3} holds true for every $x\in \partial \Omega$, or for  points $x$ such that $dist(x,\partial \Omega)\geq r_0/3$. It remains to consider balls centered at points $x\in \Omega$ verifying 
$$dist(x,\partial \Omega)\leq r_0/3.$$
Let $x$ be such a point. Then Proposition \ref{propdecayDir} directly says that \eqref{dec3} holds for every radius $r$ such that $0<r\leq dist(x,\partial \Omega)/2$, and it remains to extend this for the radii $r$ in the range
\begin{eqnarray}
dist(x,\partial \Omega)/2 \leq r\leq r_0/6. \label{range}
\end{eqnarray}
For this purpose, let $y \in \partial \Omega$ be such that 
$$dist(x,\partial \Omega)=\|x-y\|\leq r_0/3.$$
Denoting $d(x):=dist(x,\partial \Omega)$ we observe that for the $r$ that satisfies \eqref{range} we have
\begin{eqnarray}
B(x,r)\subseteq B(y,r+d(x))\subseteq B(y,3r). \label{inclusion}
\end{eqnarray}
Then  since $y \in \partial \Omega$, Proposition \ref{propdecayDir2} says that 
\begin{eqnarray}
           \int_{B(y,r)\cap \Omega}|\nabla u|^2 d x \leq
            C r^{N-2+\beta} \|u\|_p \|f\|_q\quad \; \forall \; r \in (0,r_0/2) , \label{dec4}
\end{eqnarray}
so that \eqref{dec3} follows, up to change $C$ with $3^{N-2+\beta}C$.
\end{proof}

\section{Conclusion and main result}

The classical results on Campanato Spaces can be found for instance in \cite{gia}. 
We define the space 

$$\mathcal{L}^{p,\lambda}(\Omega):=\left\{ u \in L^p(\Omega) \;;\;
 \sup_{x,\rho}\left(\rho^{-\lambda}\int_{B(x,r)\cap \Omega} |u-u_{x,r}|^p dx \right)< +\infty \right\}$$
where the supremum is taken over all $x\in \Omega$ and all $\rho\leq diam(\Omega)$, and where $u_{x,r}$ means the average of $u$ on the ball $B(x,r)$. A proof of the next result can be found in   \cite[Theorem 3.1.]{gia}.

\begin{theorem}[Campanato] \label{camp} If  $N< \lambda \leq N+p$ then
$$\mathcal{L}^{p,\lambda}(\Omega)\simeq C^{0,\alpha}(\overline{\Omega}), \quad \text{ with } \alpha=\frac{\lambda-N}{p}.$$
\end{theorem}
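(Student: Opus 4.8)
The plan is to prove the two continuous inclusions $C^{0,\alpha}(\overline\Omega) \hookrightarrow \mathcal L^{p,\lambda}(\Omega)$ and $\mathcal L^{p,\lambda}(\Omega)\hookrightarrow C^{0,\alpha}(\overline\Omega)$ and to check that the associated (semi-)norms are equivalent, where throughout I write $[u]_{p,\lambda}$ for the Campanato seminorm $\sup_{x,\rho}\big(\rho^{-\lambda}\int_{B(x,\rho)\cap\Omega}|u-u_{x,\rho}|^p\,dx\big)^{1/p}$. The first inclusion is elementary: if $u\in C^{0,\alpha}$ then $|u(y)-u_{x,\rho}|\le [u]_{C^{0,\alpha}}(2\rho)^\alpha$ for every $y\in B(x,\rho)\cap\Omega$, so that $\int_{B(x,\rho)\cap\Omega}|u-u_{x,\rho}|^p\,dx\le C[u]_{C^{0,\alpha}}^p\rho^{\alpha p}|B(x,\rho)\cap\Omega|\le C[u]_{C^{0,\alpha}}^p\rho^{\alpha p+N}$, and since $\alpha p+N=\lambda$ this is exactly the $\mathcal L^{p,\lambda}$ bound. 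The whole content of the theorem therefore lies in the reverse inclusion.

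For the reverse inclusion the main mechanism is a comparison of the averages $u_{x,\rho}$ across dyadic scales. First I would establish, for $0<\rho\le R$, the estimate $|u_{x,\rho}-u_{x,R}|\le C[u]_{p,\lambda}\,\rho^{-N/p}R^{\lambda/p}$: this follows by writing $u_{x,\rho}-u_{x,R}$ as the average of $u-u_{x,R}$ over $B(x,\rho)\cap\Omega$, applying H\"older's inequality, enlarging the domain of integration to $B(x,R)\cap\Omega$, and using the lower measure bound $|B(x,\rho)\cap\Omega|\ge c\rho^N$. Specializing to $R=2\rho$ gives the clean dyadic increment $|u_{x,\rho}-u_{x,2\rho}|\le C[u]_{p,\lambda}\rho^\alpha$ with $\alpha=(\lambda-N)/p$. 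Summing this geometric series over the dyadic radii $\rho_k=R/2^k$ shows that $\{u_{x,\rho}\}_{\rho>0}$ is Cauchy as $\rho\to0$; its limit defines a function $\tilde u(x)$ which, by the Lebesgue differentiation theorem, coincides with $u$ almost everywhere and satisfies $|u_{x,R}-\tilde u(x)|\le C[u]_{p,\lambda}R^\alpha$.

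It remains to upgrade $\tilde u$ to a genuinely $\alpha$-H\"older representative. Given $x,y\in\overline\Omega$ with $d:=|x-y|$, I would insert the averages at scale $2d$ and split $|\tilde u(x)-\tilde u(y)|\le |\tilde u(x)-u_{x,2d}|+|u_{x,2d}-u_{y,2d}|+|u_{y,2d}-\tilde u(y)|$. The outer two terms are $\le C[u]_{p,\lambda}(2d)^\alpha$ by the rate just obtained; for the middle term one compares both $u_{x,2d}$ and $u_{y,2d}$ to the average over the common smaller ball $B(x,d)\cap\Omega$, which is contained in both $B(x,2d)\cap\Omega$ and $B(y,2d)\cap\Omega$, again via H\"older and the measure bound, to obtain $|u_{x,2d}-u_{y,2d}|\le C[u]_{p,\lambda}d^\alpha$. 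Altogether $|\tilde u(x)-\tilde u(y)|\le C[u]_{p,\lambda}|x-y|^\alpha$, and the uniform bound $\|u\|_{L^p}$ controls the sup-norm, so $\tilde u\in C^{0,\alpha}(\overline\Omega)$ with the expected norm bound, which together with the first paragraph yields the isomorphism.

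The condition $N<\lambda\le N+p$ is used to guarantee $0<\alpha\le1$, so that $\alpha$ is an admissible H\"older exponent. The genuine obstacle, and the only place where the geometry of $\Omega$ enters, is the lower measure-density bound $|B(x,\rho)\cap\Omega|\ge c\rho^N$, which is invoked at every step where an average is estimated; this is precisely why the classical statement is formulated for domains satisfying such a condition (in \cite{gia} this is built into the hypotheses, and it holds for the Reifenberg-flat domains considered here).
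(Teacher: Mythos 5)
Your proof is correct, and it is essentially the argument the paper implicitly relies on: the paper gives no proof of Theorem \ref{camp}, citing instead \cite[Theorem 3.1]{gia}, and your dyadic comparison of averages $u_{x,\rho}$, the geometric summation giving the representative $\tilde u$ with $|u_{x,R}-\tilde u(x)|\le C[u]_{p,\lambda}R^{\alpha}$, and the three-term splitting for the H\"older estimate is precisely the classical Campanato proof found there. You also correctly flag the one hypothesis suppressed in the paper's statement, namely the measure-density (type~$A$) condition $|B(x,\rho)\cap\Omega|\ge c\rho^{N}$, which is assumed in \cite{gia} and holds for the Reifenberg-flat domains considered here.
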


We can now prove our main result.

\begin{proof}[Proof of Theorem \ref{main}] Considering  $u$ as a function of $W^{1,2}(\R^N)$ by setting $0$ outside $\Omega$, and applying  Proposition \ref{global} with $\beta=2\alpha$, we obtain that
\begin{eqnarray}
           \int_{B(x,r)}|\nabla u|^2 d x \leq
            C r^{N-2+\beta} \|u\|_p \|f\|_q \; \quad \forall x \in \overline{\Omega}, \; \forall \; r \in (0,r_0/6) , \label{dec5}
\end{eqnarray}
with $C= C(N, r_0, \beta,p_0)$. Recalling now the classical Poincar\'e inequality in a ball $B(x,r)$ 
$$\int_{B(x,r)}|u-u_{x,r}| dx\leq C(N)r^{1+\frac{N}{2}}\left(\int_{B(x,r)}|\nabla u|^2dx\right)^{\frac{1}{2}},$$
we get
\begin{eqnarray}
      \int_{B(x,r)}|u-u_{x,r}|dx     \leq
            C r^{N+\frac{\beta}{2}}  \; \quad \forall x \in \overline{\Omega}, \; \forall \; r \in (0,r_0/6),\label{dec6}
\end{eqnarray}
with $C= C(N, r_0, \beta,p_0,\|u\|_p ,\|f\|_q)$. But this implies that 

$$u \in \mathcal{L}^{1,N+\frac{\beta}{2}}(B(x,r_0/12)\cap \overline{\Omega}) \quad \forall x\in \overline{\Omega}.$$
Moreover 
$\frac{\beta}{2}< \frac{p_0-\frac{N}{2}}{p_0}\leq 1$
and hence Theorem \ref{camp} says that 
$$u \in C^{0,\alpha}(B(x,r_0/12)\cap \overline{\Omega})$$
with $\alpha=\frac{\beta}{2}$, and the norm is controlled by $C= C(N, r_0, \beta,p_0,\|u\|_p ,\|f\|_q)$.
 \end{proof}

\bibliographystyle{plain}
\bibliography{biblio}


\end{document}